\algrenewcommand\algorithmicrequire{\textbf{Input:}}
\algrenewcommand\algorithmicensure{\textbf{Output:}}
\newtheorem{theorem}{Theorem}[section]
\newtheorem{lemma}[theorem]{Lemma}
\theoremstyle{definition}
\newtheorem{asm}{Assumption}
\newtheorem{col}[theorem]{Corollary}
\theoremstyle{remark}
\newtheorem{remark}[theorem]{Remark}
\numberwithin{equation}{section}
\newcommand{\comment}[1]{{} }
\begin{document}

\title[Stochastic Self-Consistent Calculations]{Stochastic Algorithms for Self-consistent Calculations of Electronic Structures}


\author{Taehee Ko}
\address{Department of Mathematics \\
  The Pennsylvania State University, University Park, PA 16802, USA\\}
\email{tuk351@psu.edu}

\author{Xiantao Li}
\address{Department of Mathematics \\
  The Pennsylvania State University, University Park, PA 16802, USA\\}
\email{Xiantao.Li@psu.edu}
\thanks{This work is supported by NSF Grants DMS-1819011 and 1953120.}

\subjclass[2020]{Primary MSC 60G52, 65C40}

\date{}

\dedicatory{}

\begin{abstract}
The convergence property of a stochastic algorithm for the self-consistent field (SCF) calculations of electron structures is studied. The algorithm is formulated by rewriting the electron charges as a trace/diagonal of a matrix function, which is subsequently expressed as a statistical average. The function is further approximated by using a Krylov subspace approximation. As a result, each SCF iteration only samples one random vector without having to compute all the orbitals. We consider the common practice of SCF iterations with damping and mixing. We prove that the iterates from a general linear mixing scheme converge in a probabilistic sense when the stochastic error has a second finite moment.
\end{abstract}

\maketitle

\section{Introduction}
The computation of electron structures has recently become routine calculations in material science and chemistry \cite{martin2004@book}. Many software packages  have been developed to facilitate  these efforts \cites{elstner1998self,kronik2006parsec,marques2003,soler2002siesta}. 
A central component in modern electronic-structure calculations is the self-consistent  field (SCF) calculations \cites{parr1995density,martin2004@book}. The standard procedure is to start with a guessed density, and then determine the Hamiltonian, followed by the computation of the eigenvalues and eigenvectors  which lead to a new density; The procedure continues until the input and output densities are close.  Many numerical methods have been proposed to speed up the SCF procedure, see  \cites{banerjee2016periodic,bowler2000efficient,cances2001self,cances2000can,cances2000convergence,hamilton1986direct,kresse1996efficient,johnson1988modified,lin2013elliptic,fang2009two,payne1992iterative,zhou2006self,yang2006constrained,zhang2014gradient}. Overall, the SCF still dominates the computation, mainly because of the unfavorable cubic scaling in the computation of the eigenvalue problem.   SCF is also a crucial part of ab initio calculations, especially in the Born-Opennheimer molecular dynamics \cites{marx2009ab,tuckerman2002ab}: The motion of the nuclei causes the external potential to change continuously, and the SCF calculations have to be performed at each time step. 

The SCF problem can be formulated as a fixed-point iteration (FPI). One remarkable, but much less explored approach for FPIs, is the random methods \cite{alber2012stochastic}, which are intimately connected to the stochastic algorithms of Robbins and Monro \cites{robbins1951stochastic,chung1954stochastic,wolfowitz1952stochastic}, which in the context of machine learning, has led to the stochastic gradient descent (SGD) methods \cite{bottou2018optimization}. The advantage of these stochastic methods is for optimzation problems with very large data set,  one only calculates a small subset of samples  rather than the entire set.

The main purpose of this paper is to formulate such a stochastic algorithm in the context of SCF, and analyze its convergence property.
We first propose to use the diagonal estimator  \cite{bekas2007estimator} to approximate the matrix function involved in the SCF. The key observation is that with such a diagonal estimator, the approximate fixed-point function can be expressed as a conditional expectation. Consequently, we construct a random algorithm, where we choose a random vector to sample the conditional average. What bridges these two components together is the Krylov subspace method \cite{saad1992analysis}  that incorporates the random vector as the starting vector and approximates the matrix function using the Lanczos algorithm.
In light of the importance of mixing methods in SCF
\cites{banerjee2016periodic,bowler2000efficient,hamilton1986direct,kresse1996efficient,johnson1988modified}, which often enable and speed up the convergence of the fixed-point iterations, we consider iterative methods with damping 
and mixing, together with the stochastic algorithm. We also  present preliminary numerical results based on a generalized linear mixing scheme.

 
The convergence of SCF is certainly an outstanding challenge in scientific computing. But our analysis is applicable to general stochastic fixed-point problems. In particular, our convergence analysis 
treats general stochastic fixed-point problems when the sampling error only has a second moment bound. In this setting, one-step iteration algorithms can be viewed as a Markov chain.  Kushner and Yin \cites{kushner1965stability,kushner1967stochastic,kushner2003stochastic} introduced the notion of stochastic stability of  discrete-time Markov chains and proved their convergence with probability one when such stability holds. The underlying idea is similar to the Lyapunov function theory for ODEs.  The Markov chains considered in  \cites{kushner2003stochastic} has a similar form as the simple mixing scheme. Motivated by their analysis, we shall prove stochastic stability and convergence of the simple mixing scheme.  However, such an approach can not be directly extended to general mixing schemes, which correspond to high order Markov chains. In order to overcome this difficulty, we generalize the Lyapunov functions for extended Markov chains. Remarkably, from this, one can interpret the general mixing scheme as a first-order Markov chain. Further, we establish tools that link the convergence of the extended Markov chains to the properties of the iterates which are of our interest. 
 With the tools and the generalized Lypaunov functions, we will prove that general mixing schemes are also stable and converge to the fixed point with probability one when stability holds, still under the milder condition that the second moment of the stochastic error is finite.

In practice, the models for electronic structure calculations, e.g., the density-functional theory (DFT) \cites{Kohn1965,hohenberg1964inhomogeneous}, has to be discretized in space. One straightforward implementation is the real space discretization using finite difference \cite{beck_real-space_2000}. To illustrate how to formulate a stochastic fixed-point problem from a more sophisticated discretization, we consider the framework of the self-consistent charge density functional tight-binding method (SCC-DFTB) \cite{elstner1998self}, which has been an important semi-empirical methodology in modern  electronic structure simulations. More specifically, Elstner and coworkers  devised the method as an improvement of the non-SCC approach. As an application, we will present our stochastic algorithms based on this tight-binding framework, although the application to real-space methods , e.g., \cites{beck_real-space_2000,kronik2006parsec,suryanarayana2010non}, is straightforward. In the computational chemistry literature, the stochastic DFT  \cite{cytter2018stochastic} shows resemblance to the present work, especially with the use of randomized algorithms for estimating traces and approximation methods for matrix-vector products. The trace estimator in their work is essentially equivalent to the diagonal estimator \cites{bekas2007estimator}, which is also used in this work. In \cites{bekas2007estimator} the density-matrix is approximated by  Chebyshev polynomials, while in our approach,   we use the Krylov subspace approximation. 
More importantly, the framework proposed in \cites{bekas2007estimator} is mainly computational. In contrast, this paper presents several theoretical results that are crucial to understanding the performance of such algorithms. Another class of methods that also work with the density matrix is linear-scaling algorithms for DFT \cite{bowler2002recent}, which is achieved by exploiting the sparsity of the density matrix. 

The rest of the paper is organized as follows.  We first present  general  stochastic fixed-point problems and mixing schemes. Section \ref{section3} presents convergence analysis, with emphasis on convergence in probability and the implication to computational complexity.  
In Section \ref{section4}, we focus specifically on electron structure calculations. We review the SCF procedure in a tight-binding model \cite{elstner1998self}. We show that the electron charges can be expressed as a trace formula. Based on such expressions, we construct a stochastic algorithm, using the methods in Section \ref{section2}. In Section \ref{section5}, we present some numerical results.

\section{Stochastic Fixed-Point Iterations and Mixing schemes}\label{section2}
 We consider numerical methods for the fixed-point problem,
\begin{equation}\label{eq: problem}
    q=K(q)=\mathbb{E}[k(q,v)].
\end{equation}
Here $q\in\mathbb{R}^N$, and the mapping $K:\mathbb{R}^N\mapsto\mathbb{R}^N$,  is represented as the expectation of some random mapping $k(q,v)$ with respect to a random vector $v$ whose distribution is known in advance. This problem will be referred to as a \emph{stochastic fixed-point problem}.

A direct approach for the problem \eqref{eq: problem} is the fixed-point iterations,
\begin{equation}
    q_{n+1} = K(q_n).
\end{equation}
More generally, one can introduce damping and mixing to improve the convergence, as follows,
\begin{equation}\label{eq: lmixing}
     q_{n+1}=(1-a_n)\sum_{i=1}^mb_iq_{n-m+i}+a_n\sum_{i=1}^mb_iK(q_{n-m+i}).
\end{equation}
Here $a_n \in (0,1)$ can be regarded as a damping parameter;  $m\in \mathbb{N}$, and the $m$ constants $b_1,b_2,..,b_m \in \mathbb{R}$ satisfy the conditions that $\sum_{i=1}^mb_i=1$ and $b_i\ge 0$.  

Meanwhile, directly implementing the scheme \eqref{eq: lmixing} for the stochastic fixed-point problem \eqref{eq: problem} requires repeated sampling of $k(q,v)$ \cite{toth_local_2017}, which can be computationally demanding. The stochastic algorithm addresses this problem by
\begin{equation}\label{eq: linearm}
        q_{n+1}=
        (1-a_n) \sum_{i=1}^mb_iq_{n-m+i}+a_n\sum_{i=1}^mb_ik(q_{n-m+i},v_{n-m+i}).
\end{equation}
Namely, we only sample the random fixed-point function once (or a small number of times) in each iteration as shown {\bf Algorithm} \ref{alg: linear}. 

To better describe the linear mixing method \eqref{eq: linearm},
we denote by
\begin{equation}\label{eq: Bm}
 B_m(y_n):=\sum_{i=1}^mb_iy_{n-m+i},   
\end{equation}
the linear combination of a number of previous iterations.  The terminology of {\it linear} mixing simply means that the right-hand side forms a linear combination of the previous iterates.  As outlined in Algorithm \ref{alg: linear}, the implementation is quite straightforward. 

\begin{algorithm}
\SetAlgoLined
	\KwData{$\{a_n\}$, $\{b_i\}_{i=1}^m$,$\{q_i\}_{i=1}^m$}
	\KwResult{Approximate fixed-point}
	\label{alg: linear}
	\For{$n=m,m+1,...,\;$until convergence} 
    {Compute $k(q_{n+1},v_{n_+1})$ \;
    $q_{n+1}=(1-a_n)B_m(q_{n})+a_n B_m(k(q_n,v_n))$\;}
    	\caption{Linear mixing method}
\end{algorithm}

When $m=1$, the linear mixing scheme \eqref{eq: simplem} is reduced to
 \begin{equation}\label{eq: simplem}
        q_{n+1}=
        (1-a_n)q_{n}+a_n k(q_{n},v_{n}),
\end{equation} 
which is known as simple mixing in electronic structure calculations.  Under appropriate assumptions, the iterations from the simple mixing \eqref{eq: simplem}  have been shown to converge almost surely \cites{alber2012stochastic}. Similar results are established for the stochastic gradient descent (SGD), in the machine learning literature \cite{bottou2018optimization}, which was originated from the work \cites{robbins1951stochastic}. However, an important question is whether the linear mixing method \eqref{alg: linear} converges with a general depth $m\geq 1$ under mild assumptions. In the following section, we show that despite the random noise within the sample $k(q,v)$, the mixing scheme \ref{alg: linear} for any $m\geq 1$ converges in the probabilistic sense.

\begin{remark}
  The mixing methods require multiple initial guesses. They can be computed from the simple mixing method \eqref{eq: simplem}. Alternatively,  this can be done by setting $m=1$ to generate the second iteration $q_2,$ and then $m=2$ to find $q_3$, until all the initial vectors are computed, see \cite{toth_convergence_2015}. For simplicity, we assume that all the $m$ initial vectors have been computed, and our analysis focuses on the subsequent iterations. 
\end{remark}

\begin{remark} In practice, there are  situations where  $K(q)$ is approximated  by  $K_{\ell}(q)$, which is easier to compute. The parameter $\ell$ indicates the order of such  anapproximation. Compared to the original problem \eqref{eq: problem}, this approximation leads to a perturbed fixed-point problem,
 \begin{equation*}
     q=K_{\ell}(q).
 \end{equation*}In Section \ref{section4} we will quantify the effect of such perturbation in the context of electron structure calculations with Theorem \ref{krylovapprox} and the numerical results in Figure \ref{fig:errvsd}. 
 
\end{remark}

\section{Convergence Theorems and Complexity Estimates}\label{section3}
 In this section, we present convergence analysis for the linear mixing scheme \eqref{alg: linear}, which are applicable to a large class of stochastic fixed-point problems. Under standard assumptions, these theorems highlight the stochastic stability properties and probabilistic convergence of the mixing algorithm. 

Let us first outline the main ingredients in the proofs at the high level. 
To characterize stochastic stability, we define a  family of Lyapunov functions whose input contain the $m$ iterates and the intermediate stochastic errors. With the results in Appendix \ref{sec: A5}, we will show that the extended Markov chains will produce non-negative supermartingales with the family of Lyapunov functions with some perturbations. In the end, we employ the optional stopping theorem on non-negative supermartingales to complete the proof  \cites{williams1991probability,kushner2003stochastic,resnick2019probability}. 


\subsection{Assumptions}\label{sectionsto}
 The first assumption is that the mapping $K$ is \emph{locally} contractive, since many problems in practice, including electron structure calculations, are not expected to be globally contractive.  Remarkably, as we show after Theorem \ref{casem}, this condition can be further relaxed to a stability condition associated with the Jacobian of the mapping $K$ at its fixed-point $q^*$, under which the following convergence theorems still hold.

\begin{asm}\label{as:04}
 For some $\rho>0$ and some $c\in(0,1)$,
\begin{equation}\label{eq: contractive}
\begin{aligned}
    &\|K(x)-K(y)\|_2\leq c\|x-y\|_2
\end{aligned}
\end{equation}for all $x,y\in$ $\mathcal{B}(q^*,\rho)$ where $\mathcal{B}(q^*,\rho)$ is the ball centered at $q^*$ of radius $\rho$ with respect to the $2$-norm.
\end{asm}

\medskip 

The second assumption is on the random error arisen in the evaluation $K(q)$.
\begin{asm}\label{as:05}
For every $q\in \mathcal{B}(q^*,\rho)$, the random error $\xi$
\begin{equation}\label{eq: xi}
    \xi(q,v):=k(q,v)-K(q), 
\end{equation}
 satisfies,
\begin{equation}
        \mathbb{E}\big[\xi(q,v)|q\big]=0\quad\text{and}\quad    \sup_{q\in B(q^*,\rho)}\mathbb{E}\big[\|\xi(q,v)\|_2^2|q\big]\leq\Xi.
\end{equation}
\end{asm}
 The first condition means that $\xi(q,v)$ has zero mean, or the approximation by $k(q,v)$ is unbiased.  The second condition states that the variance of the random error is uniformly bounded in the domain of $K$ defined in Assumption \ref{as:04}. These conditions are standard in the machine learning theory \cite{bottou2018optimization}. In terms of application, Assumption \ref{as:05} is fulfilled by the construction of a stochastic algorithm for the DFTB+ as we will explain in Corollary \ref{noise}.

\subsection{Stochastic Stability and Probabilistic Convergence}

We begin by introducing notations for the following theorems. $\mathbb{I}_A$ denotes the indicator function which values one in the event $A$ and zero otherwise. Let $R_n$ be the residual equal to $K(q_n)-q_n$ and $\xi_n:=\xi(q_n,v_n)$ be the stochastic error at step $n$. We define
\begin{equation}\label{eq: residual}
G_n:=R_n+\xi_n    
\end{equation}as the sum of the residual and the stochastic error at step $n$. 

Let us first establish the convergence of the simple mixing scheme. Motivated by the analysis in \cite{kushner2003stochastic}, we start by defining a perturbed Lyapunov functional,  
\begin{equation}\label{simLya}
V_n(q_n)=V(q_n)+\delta V_n,    
\end{equation}
where
\begin{equation}\label{eq: V}
    V(q)=\|q-q^*\|_2^2, \quad \delta V_n=\Xi\sum_{i=n}^{\infty}a_i^2.
\end{equation}

\begin{theorem}\label{stabsimple}
Assume that the damping parameters $\{a_n\}$ satisfies \begin{equation}\label{eq: conda}
   \sum_n a_n = \infty, \quad   \sum_n a_n^2<\infty,\quad a_n\leq \frac{1-c}{(1+c)^2}\;\; \forall n\in\mathbb{N}. 
\end{equation}Under Assumptions \ref{as:04} and \ref{as:05}, the simple mixing scheme \eqref{eq: simplem} $(m=1)$ has the following properties:
 \begin{enumerate}
     \item [{\bf (i)}] The iterations $\{q_n\}_{n=1}^{\infty}$ leave the ball $B(q^*,\rho)$ with probability,
     \begin{equation*}
         \mathbb{P}\left\{\sup_{n}\|e_n\|_2 > \rho|q_1\right\}\mathbb{I}_{\{q_1\in B(q^*,\rho)\}}\leq \frac{V_1(q_1)}{\rho^2},
     \end{equation*}
where the function $V_1$ is given as \eqref{simLya}. 

Each path $\{q_n\}_{n=1}^{\infty}$ that stays in the ball will be called a stable path.      
     \item [{\bf (ii)}] Each stable path converges to $q^*$, i.e.,
\begin{equation*}
    \mathbb{P}\left\{\lim_{n\to\infty}q_n=q^*|\{q_n\}_{n=1}^{\infty}\subset B(q^*,\rho)\right\}=1.
\end{equation*}
 \end{enumerate}
 Consequently, the iterations $\{q_n\}_{n=1}^{\infty}$ converge to $q^*$ with probability at least $1-\frac{V_1(q_1)}{\rho^2}$.
\end{theorem}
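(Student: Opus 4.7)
The plan is to mimic the Kushner--Yin Lyapunov-function strategy for stochastic recursions, building a non-negative supermartingale from the perturbed functional $V_n$ in \eqref{simLya} and then reading off part (i) from Doob's maximal inequality and part (ii) from the non-negative supermartingale convergence theorem. The perturbation $\delta V_n=\Xi\sum_{i=n}^{\infty}a_i^2$ is tailor-made to cancel the variance contribution $a_n^2\Xi$ generated by the stochastic error $\xi_n$ at each step.

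First I would write the one-step recursion as
\[
q_{n+1}-q^* = (1-a_n)(q_n-q^*) + a_n\bigl(K(q_n)-K(q^*)\bigr) + a_n\xi_n,
\]
using $K(q^*)=q^*$. Conditioning on the natural filtration $\mathcal{F}_n$ and using Assumption~\ref{as:05}, the cross term $\langle\cdot,\xi_n\rangle$ vanishes and the diffusion term contributes at most $a_n^2\Xi$. Applying the local contractivity in Assumption~\ref{as:04} on the event $\{q_n\in\mathcal{B}(q^*,\rho)\}$ gives
\[
\mathbb{E}\bigl[V(q_{n+1})\mid \mathcal{F}_n\bigr] \leq \bigl(1 - a_n(1-c)\bigr)^2 V(q_n) + a_n^2\Xi.
\]
Combining with the exact identity $\delta V_{n+1} = \delta V_n - a_n^2\Xi$ and using the step-size bound $a_n\leq (1-c)/(1+c)^2$ to absorb the $O(a_n^2)$ remainder yields the key drift inequality
\[
\mathbb{E}\bigl[V_{n+1}(q_{n+1})\mid \mathcal{F}_n\bigr] \leq V_n(q_n) - a_n(1-c)\,V(q_n),
\]
valid whenever $q_n\in\mathcal{B}(q^*,\rho)$.

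The main obstacle is that this inequality only holds inside the ball, so $V_n(q_n)$ itself need not be a supermartingale on the whole sample space. I plan to resolve this by introducing the first exit time $\tau := \inf\{n\geq 1: q_n\notin\mathcal{B}(q^*,\rho)\}$ and working with the stopped process $\widetilde q_n := q_{n\wedge\tau}$. Because the recursion is frozen after $\tau$ and $\delta V_n$ is monotonically decreasing, $\{V_n(\widetilde q_n)\}$ is a non-negative supermartingale on all of $\mathbb{N}$. Since $V_n(\widetilde q_n)\geq \|\widetilde q_\tau-q^*\|_2^2\geq\rho^2$ for every $n\geq\tau$, Doob's maximal inequality for non-negative supermartingales gives
\[
\mathbb{P}\!\left\{\sup_n\|q_n-q^*\|_2>\rho\,\middle|\, q_1\right\}\mathbb{I}_{\{q_1\in\mathcal{B}(q^*,\rho)\}} \leq \frac{V_1(q_1)}{\rho^2},
\]
which is exactly part (i).

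For part (ii), the same supermartingale $V_n(\widetilde q_n)$ converges almost surely to a finite nonnegative limit $V_{\infty}$ by Doob's convergence theorem. On a stable path $\widetilde q_n=q_n$ for all $n$ and $\delta V_n\to 0$, so $\|q_n-q^*\|_2^2\to V_\infty$. To identify $V_\infty=0$, I would take expectations in the drift inequality, telescope using the optional stopping theorem, and obtain $\mathbb{E}\bigl[\sum_{n=1}^{\tau-1} a_n(1-c)V(q_n)\bigr]\leq V_1(q_1)$, hence $\sum_n a_nV(q_n)<\infty$ almost surely on stable paths. Combined with $\sum_n a_n=\infty$ and the existence of $\lim V(q_n)$, this forces $V_\infty=0$, so $q_n\to q^*$ on every stable path. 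The final statement then follows by combining (i) and (ii): the probability of convergence is at least the probability of a stable path, which is at least $1-V_1(q_1)/\rho^2$.
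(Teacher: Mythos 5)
Your proposal is correct and follows essentially the same route as the paper's proof: the same perturbed Lyapunov functional with $\delta V_n$ canceling the $a_n^2\Xi$ variance term, the same stopped-process construction yielding a non-negative supermartingale, Doob's maximal inequality for part (i), and supermartingale convergence plus the telescoped bound $\mathbb{E}\bigl[\sum_n a_n V(q_n)\bigr]\leq V_1(q_1)/(1-c)$ together with $\sum_n a_n=\infty$ to force the limit to be zero in part (ii). The only cosmetic difference is that the paper phrases the last step as a contradiction via an auxiliary lemma on $\liminf_n\|e_n\|_2$, whereas you read off $\sum_n a_n V(q_n)<\infty$ a.s. directly; the two are equivalent.
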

To highlight the main theoretical results, the proof is included in Appendix \ref{sec: A6}.

\medskip

To handle the linear mixing scheme \eqref{alg: linear} with $m\geq 2$, we work with the $m$ vectors at each step. Motivated with the framework and notations in \cite{kushner2003stochastic}, we define an extended state variable by lumping every $m$ iterations  coupled with the stochastic noises
\begin{equation}
    X_n:=(q_{n+m-1},\xi_{n+m-2},q_{n+m-2},..,\xi_n,q_{n})\in\mathbb{R}^{(2m-1)N},
\end{equation} which forms a first-order Markov chain. In accordance with this, we consider a filtration $\{\mathcal{F}_n\}$ which measures at least $\{X_i,i\leq n\}$. Let us denote by $\mathbb{E}_n$ the expectation conditioned on $\mathcal{F}_n$. 

We assume that the mixing coefficient satisfies that
\begin{equation}\label{mixingcoeff}
    \sum_{i=1}^mb_i=1,\quad b_i\geq 0,\quad b_m>0.
\end{equation}

The general case $m\geq 2$ requires a more sophisticated construction of the Lyapunov function. We define a more general Lypaunov function as 
\begin{equation}
    \|X_n\|_n=\|e_{n+m-1}\|_2^2+\sum_{j=2}^m\sum_{i=j}^mb_{m-i+1}\|e_{n+j-2+m-i}+a_{n+m-3+j}G_{n+j-2+m-i}\|_2^2.
\end{equation}The subscript $n$ in $\|\cdot\|_n$ is meaningful, since this Lyapunov function depends on $n$ as the coefficient $a_{n+m-3+j}$ in each summand does so.

Together with this function, we define the perturbed Lyapunov function
\begin{equation}\label{mLya}
    V_n(X_n)=\|X_n\|_n+\Xi \sum_{i=n}^{\infty}\chi_i,
\end{equation}where 
\begin{equation}\label{chi-n}
    \chi_n:=\sum_{j=1}^mb_{m-j+1}a_{n+m-2+j}^2.
\end{equation}Note that $\chi_n$ is recognized as a weighted sum of the $m$ squares of the damping parameters. If $m=1$, it is exactly the case of simple mixing in \eqref{eq: V}.

\begin{theorem}\label{casem}
Assume that the damping parameters $\{a_n\}$ satisfy \eqref{eq: conda} and the mixing coefficients satisfy \eqref{mixingcoeff}. Under Assumptions \ref{as:04} and \ref{as:05}, the general mixing scheme \eqref{eq: linearm} ($m\geq 2$) satisfies
\begin{enumerate}
    \item [\bf (i)]The iterations $\{q_n\}_{n=1}^{\infty}$ leaves the ball $B(q^*,\rho)$ with  probability bounded by, 
    \begin{equation*}
    \mathbb{P}\left\{\sup_{n\geq m+1}\|e_n\|_2>\rho|\{q_i\}_{i=1}^m\right\}\mathbb{I}_{\{\{q_i\}_{i=1}^m\subset B(q^*,\rho)\}}\leq\frac{V_1(X_1)}{\rho^2},
\end{equation*}where $V_1$ is given as \eqref{mLya}.
    \item [\bf (ii)]Each stable path converges to $q^*$,
    \begin{equation*}
        \mathbb{P}\left\{\lim_{n\to\infty}q_n=q^*|\{q_n\}_{n=1}^{\infty}\subset B_{\infty}(q^*,\rho)\right\}=1.
    \end{equation*}
\end{enumerate}
 Consequently, the linear mixing scheme $(m\geq 2)$ converges to $q^*$ with probability at least $1-\frac{V_1(X_1)}{\rho^2}$.
\end{theorem}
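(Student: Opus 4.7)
The plan is to follow the template of Theorem \ref{stabsimple} but carried out on the higher-dimensional state variable $X_n$. I would first establish the one-step supermartingale inequality
\[
\mathbb{E}_n[V_{n+1}(X_{n+1})] \leq V_n(X_n)
\]
as long as the $m$ iterates packaged inside $X_n$ all lie in $\mathcal{B}(q^*,\rho)$, and then derive part (i) by applying Doob's optional stopping theorem at the first exit time from the ball and part (ii) by applying the nonnegative supermartingale convergence theorem on the stable event.

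To verify the supermartingale step, I would subtract $q^*$ from the recursion \eqref{eq: linearm} and use $\sum_i b_i = 1$ together with $K(q^*) = q^*$ to obtain
\[
e_{n+m} = B_m(e_{n+m-1}) + a_{n+m-1}\,B_m(G_{n+m-1}),
\]
where $G_j = R_j + \xi_j$ is the quantity defined in \eqref{eq: residual}. Expanding $\|e_{n+m}\|_2^2$ and taking $\mathbb{E}_n[\,\cdot\,]$, every cross term containing $\xi_{n+m-1}$ vanishes by the unbiasedness in Assumption \ref{as:05}; the remaining diagonal noise contribution is bounded by $\Xi\chi_n$, which is precisely the term that disappears when the perturbation $\Xi\sum_{i\geq n}\chi_i$ is shifted by one index. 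The deterministic part splits into (a) a sum of squares $\|e_{n+m-1-i}+a_{n+m-2}G_{n+m-1-i}\|_2^2$ that matches, after re-indexing, the second sum defining $\|X_{n+1}\|_{n+1}$, and (b) residual terms controlled by contractivity \eqref{eq: contractive}, where the damping restriction $a_n \leq (1-c)/(1+c)^2$ from \eqref{eq: conda} ensures that $\|B_m(e)+aB_m(R)\|_2^2$ does not exceed $\|B_m(e)\|_2^2$. The tools for converting this higher-order recurrence into a genuine first-order Markov supermartingale are supplied by the lemmas in Appendix \ref{sec: A5}. I expect this bookkeeping step, in particular choreographing the mixing weights $b_i$ so that the shifted indices in $\|X_n\|_n$ absorb every leftover quadratic cleanly, to be the main technical obstacle.

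Granted the supermartingale property, let $\tau := \inf\{n \geq m+1 : q_n \notin \mathcal{B}(q^*,\rho)\}$ and apply optional stopping to $V_{n\wedge\tau}(X_{n\wedge\tau})$; on $\{\tau<\infty\}$ one has $V_\tau(X_\tau) \geq \|e_\tau\|_2^2 \geq \rho^2$, so a Markov-type inequality conditioned on the initial $m$ vectors gives
\[
\rho^2\, \mathbb{P}\bigl\{\tau < \infty \,\big|\, \{q_i\}_{i=1}^m\bigr\}\,\mathbb{I}_{\{\{q_i\}_{i=1}^m\subset \mathcal{B}(q^*,\rho)\}} \leq V_1(X_1),
\]
which is part (i).

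For part (ii), I would restrict to the stable event $\{\tau = \infty\}$. Doob's theorem then yields almost sure convergence of the nonnegative supermartingale $V_n(X_n)$, and a Robbins--Siegmund-style accounting applied to the one-step inequality extracts a summable nonnegative surplus whose dominant piece is comparable to $a_n\|R_n\|_2^2$. Combined with $\sum a_n = \infty$ from \eqref{eq: conda} and the local contractivity $\|R_n\|_2 \leq (1+c)\|e_n\|_2$, this forces $\liminf_n \|e_n\|_2 = 0$ a.s.; contractivity near $q^*$ then promotes this to $\lim_n \|e_n\|_2 = 0$ a.s. The probability lower bound $1 - V_1(X_1)/\rho^2$ follows by combining (i) and (ii).
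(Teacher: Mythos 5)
Your construction of the extended state $X_n$, the perturbed Lyapunov function, the one-step supermartingale inequality, and the optional-stopping argument for part (i) all match the paper's proof (the bookkeeping you anticipate is exactly the computation carried out in Appendix \ref{sec: A5}, culminating in \eqref{lyapunovfunction}). The gap is in part (ii), at the step where you claim that ``contractivity near $q^*$ promotes $\liminf_n\|e_n\|_2=0$ to $\lim_n\|e_n\|_2=0$.'' This promotion is not available here: the noise $\xi_n$ is controlled only in second moment (Assumption \ref{as:05}), so no pointwise contraction argument can prevent individual iterates from being kicked away from $q^*$ along a subsequence, and knowing that the supermartingale $V_n(X_n)$ converges only tells you that the \emph{windowed} quantity $\|e_{n+m-1}\|_2^2+\sum_{j=2}^m\bigl(\sum_{i=1}^{m-j+1}b_i\bigr)\|e_{n+m-j}\|_2^2$ has a limit --- which by itself does not rule out the individual $\|e_n\|_2$ oscillating while the weighted combination stays constant. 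For $m=1$ this issue does not arise, which is why the simple-mixing proof goes through directly; for $m\geq 2$ it is the main new difficulty.

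The paper closes this gap with Lemma \ref{genconv}: writing the limiting mixing recursion $l_{n+m}=\sum_i b_i l_{n+i-1}$ (valid because $a_n\to 0$), it views the associated companion matrix $B$ as an irreducible aperiodic stochastic matrix (here the hypothesis $b_m>0$ from \eqref{mixingcoeff} is used) and invokes Perron--Frobenius convergence of $B^n$ to a rank-one limit to show that all $m$ shifted error sequences must share a common limit along any convergent subsequence. This establishes that $\lim_n\|e_n\|_2$ \emph{exists}; only then does the telescoping argument with $\sum_n A_n=\infty$ (your Robbins--Siegmund surplus, which is correct) force that limit to be zero by contradiction. So the order of operations in the paper is: existence of the limit first (via the companion-matrix lemma), then identification of the limit as zero --- rather than $\liminf=0$ first and an upgrade afterwards. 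You would need to supply an argument of the Lemma \ref{genconv} type, or an equivalent device, to make your part (ii) rigorous.
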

The proof for general $m$ is similar to that of Theorem \ref{stabsimple}. We refer readers to the proof in Appendix \ref{sec: A6}.

\bigskip

The contractive property \eqref{eq: contractive} clearly plays an important role in the analysis of fixed-point iterations. But this starting point is not specific to a stochastic algorithm. Rather, it is also essential in deterministic settings \cites{toth_convergence_2015,toth_local_2017}. In the context of SCF iterations, \cites{lin2013elliptic,cances2021convergence} showed that with a damping term, the contractive property \eqref{eq: contractive} of the modified map, 
\begin{equation}\label{Ktheta}
    K_\theta:= (1-\theta)q + \theta K(q),
\end{equation}
can be guaranteed when the eigenvalues of the Jacobian of the fixed-point map at $q^*$, here denoted by $K'(q^*)$, are real and less than $1$. This implies that there exists a $\theta_{\max}>0$, such that whenever $0<\theta<\theta_{\max}$, the spectral radius of $  K'_\theta(q^*)$ is less than 1.  The argument in \cite{lin2013elliptic} used the connection to the dieletric operator that can be symmetrized, and its physical interpretation of material stability. Similar stability conditions have been used in \cite{cances2021convergence} to prove convergence of iterative methods for optimization problems. 

 We found that 
it is enough to assume that $K'(q^*)$ has eigenvalues in $\mathbb{C}$ with real part less than 1 to ensure  the local contraction  \eqref{eq: contractive} under some vector norm. This is more general than the conditions in  \cites{lin2013elliptic,cances2021convergence},  as indicated in the following theorem,
\begin{theorem}\label{thmcontract'}
Suppose that $K$ is continuously differentiable and $\mathrm{Re}(\lambda)<1$ for each eigenvalue $\lambda$ of $K'(q^*)$. Then, there exists an unitary transformation $U$ and a vector norm, $\| x \|_D= (x,Dx)^{1/2}$ for some $D \succ 0,$ and  under this norm  the mapping 
$K_{U,\theta}(p)= (1-\theta) p + \theta U^{-1}K(Up)  $
is locally contractive 
for any $0< \theta < \theta_{\max}$.
\end{theorem}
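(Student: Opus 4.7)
The plan is a two-step reduction. First I linearize at the fixed point to construct $U$, $D$, and identify $\theta_{\max}$; then I lift the resulting linear contraction to a local nonlinear contraction using continuity of $K'$. To set up, let $J := K'(q^*)$ and consider the damped linearization $M_\theta := (1-\theta)I + \theta J$, whose eigenvalues are $\mu_i(\theta) = 1 - \theta(1 - \lambda_i)$ for each eigenvalue $\lambda_i$ of $J$. A direct expansion of $|\mu_i(\theta)|^2$ yields $|\mu_i(\theta)| < 1$ exactly when $0 < \theta < 2\,\mathrm{Re}(1 - \lambda_i)/|1 - \lambda_i|^2$, and this upper bound is strictly positive by the hypothesis $\mathrm{Re}(\lambda_i) < 1$. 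Taking $\theta_{\max}$ to be the minimum of these bounds over the spectrum of $J$, the spectral radius of $M_\theta$ stays strictly below $1$ for every $\theta \in (0, \theta_{\max})$.

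Next I apply a real Schur decomposition to obtain an orthogonal $U$ with $U^{-1} J U = T$ quasi-upper-triangular; then $M_{U,\theta} := U^{-1} M_\theta U = (1-\theta)I + \theta T$ is similar to $M_\theta$ and therefore also has spectral radius less than $1$. Because $\|M_{U,\theta}^k\|_2 \to 0$ geometrically, the series
\[
D := \sum_{k=0}^\infty (M_{U,\theta}^{k})^{\top} M_{U,\theta}^{k}
\]
converges and defines a real symmetric positive-definite matrix satisfying the discrete Lyapunov identity $M_{U,\theta}^\top D\, M_{U,\theta} = D - I$. Since $M_{U,\theta}$ is precisely the Jacobian of $K_{U,\theta}$ at $p^* := U^{-1} q^*$, one obtains $\|M_{U,\theta} x\|_D^2 = \|x\|_D^2 - \|x\|_2^2 \leq \bigl(1 - \|D\|_2^{-1}\bigr) \|x\|_D^2$, providing a strict contraction constant $c_* := \bigl(1 - \|D\|_2^{-1}\bigr)^{1/2} \in (0, 1)$ in the $D$-norm.

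To upgrade this linear estimate to a local contraction of $K_{U,\theta}$ itself, I invoke the integral mean value theorem:
\[
K_{U,\theta}(p) - K_{U,\theta}(p') = \int_0^1 \bigl[(1-\theta)I + \theta U^{-1} K'\bigl(U((1-t)p' + tp)\bigr) U\bigr](p - p')\,dt.
\]
By continuity of $K'$ at $q^*$, the bracketed integrand converges to $M_{U,\theta}$ uniformly in the operator norm as $(p, p') \to (p^*, p^*)$; translating to the $D$-norm via the equivalence of norms and applying the triangle inequality inside the integral shows that for any prescribed $\varepsilon > 0$, the $D$-norm Lipschitz constant of $K_{U,\theta}$ on a sufficiently small ball around $p^*$ is bounded by $c_* + \varepsilon$. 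Choosing $\varepsilon < 1 - c_*$ yields local contractivity under $\|\cdot\|_D$.

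The principal subtlety lies in Step 2: ensuring $D$ is real symmetric positive-definite despite possibly complex eigenvalue pairs in $J$. This is why I use the real Schur form rather than the (generally complex) unitary Schur decomposition; alternatively, one can appeal directly to the real discrete Lyapunov theorem and take $U = I$, in which case the role of $U$ in the statement is purely cosmetic. A secondary point is that the $\theta_{\max}$ extracted from the spectrum is the same one that appears in the final Lipschitz bound, since the Jacobian analysis dictates both the admissible damping range and the Lyapunov matrix $D$.
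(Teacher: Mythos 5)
Your proof is correct, and its skeleton matches the paper's: first show the spectral radius of the damped Jacobian $M_\theta=(1-\theta)I+\theta K'(q^*)$ drops below $1$ for $0<\theta<\theta_{\max}$, then build a norm in which that Jacobian is a strict contraction, and finally pass from the linear estimate to local contractivity of $K_{U,\theta}$ via continuity of $K'$ and the mean value theorem. Where you genuinely diverge is the norm construction. The paper takes the (complex) Schur form $U^{-1}K'(q^*)U=T$ and invokes Theorems 3--4 of Kincaid--Cheney, which rescale the upper-triangular $T$ by a diagonal matrix $\mathrm{diag}(1,\varepsilon,\varepsilon^2,\dots)$ so that the induced operator norm is within $\varepsilon$ of the spectral radius; the resulting $D$ is diagonal. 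You instead solve the discrete Lyapunov (Stein) equation $M^\top D M=D-I$ via the convergent series $D=\sum_k (M^k)^\top M^k$, which yields a non-diagonal but symmetric positive-definite $D$ together with the quantitative contraction factor $c_*=(1-\|D\|_2^{-1})^{1/2}$. Your route buys three things: an explicit formula $\theta_{\max}=\min_i 2\,\mathrm{Re}(1-\lambda_i)/|1-\lambda_i|^2$ rather than a pure existence claim; a clean resolution of the real-versus-complex issue (the paper's unitary $U$ is complex when $K'(q^*)$ has non-real eigenvalues, which sits awkwardly with real iterates, whereas your real Schur form keeps everything real); and, as you note, the observation that with the Lyapunov construction one may take $U=I$ outright, so the unitary transformation in the statement is dispensable. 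What the paper's diagonal-scaling construction buys in exchange is a contraction constant that can be pushed arbitrarily close to $\rho(M_\theta)$, whereas $c_*$ from the Lyapunov solution is generally not sharp; neither the theorem nor its downstream use requires sharpness, so this costs nothing here.
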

The proof can be found in Appendix \ref{sec: A2}.



Now we demonstrate how the results in Theorems \ref{stabsimple} and \ref{casem} can still be retained under such a relaxed condition. Without loss of generality, we consider the simple mixing scheme. Choose $\theta>0$ such that the mapping $K_{U,\theta}$ is contractive according to Theorem \ref{thmcontract'}. Let $a_n$ be any damping coefficient such that $\theta>a_n>0$. Using the unitary transformation $U$, one can obtain equivalent iterations using the  follows steps,
\begin{subequations}\label{modification}
\begin{eqnarray}
        &q_{n+1}=&(1-a_n)q_n+a_n\big(K(q_n)+\xi_n\big),\\
        &p_{n+1}=&(1-a_n)p_n+a_n\big(K_U(p_n)+U^{-1}\xi_n\big),\\
        &p_{n+1}=&(1-\frac{a_n}{\theta})p_n+\frac{a_n}{\theta}\big(K_{U,\theta}(p_n)+\theta U^{-1}\xi_n\big). \label{eq: pn}
\end{eqnarray}
\end{subequations}The first line is simple mixing with a mapping $K$. By multiplying $U^{-1}$ from the left, the second expression follows by defining $K_U(p):=U^{-1}K(Up)$. More importantly, the third expression can be viewed as the simple mixing scheme with the contraction $K_{U,\theta}$ by Theorem \ref{thmcontract'}. 
Moreover, the constant factor $\theta U^{-1}$ in front of the error $\xi_n$ does not affect the assumption \ref{as:05}. Therefore, the analysis in Theorems \ref{stabsimple} and \ref{casem} can be applied to \eqref{eq: pn}  to obtain the same result in terms of $\{p_n\}$, which can be extended to the iterations $\{q_n\}$ using the equivalence of norms.   The same idea can be applied to the general linear mixing scheme \eqref{eq: linearm} due to the linear combination of the fixed-point functions.


\subsection{The iteration complexity of the stochastic linear mixing method}\label{cmp}

In this section, we study the iteration complexity, that is, the number of required iterations to reach an accuracy threshold $\epsilon$. In terms of stochastic approximation methods, the iteration complexity has been an important topic in optimization problems \cite{bottou2018optimization}.

For the fixed-point problems \eqref{eq: problem}, a direct calculation has been proved to be linearly convergent \cites{lin2013elliptic,cances2021convergence}, suggesting that the number of iterations is $\Theta(\log \frac{1}{\epsilon} ).$ To obtain the corresponding complexity of a stochastic algorithm,  
 we use the same techniques  for the previous theorems, and deduce the following inequality for the linear mixing scheme \eqref{alg: linear},
\begin{equation}
\mathbb{P}\left\{\sup_{m+1\leq n\leq j}\|e_n\|_2>\rho|\{q_i\}_{i=1}^m\right\}\mathbb{I}_{\{\{q_i\}_{i=1}^m\subset B(q^*,\rho)\}}\leq\frac{V_1(X_1)}{\rho^2},
\end{equation}where  $V_1$ is defined as,
\begin{equation}\label{vx1}
    V_1(X_1)=\|X_1\|_1+\Xi\sum_{i=1}^j \chi_i.
\end{equation}

We denote by $E_j:=\{q_n\in B(q^*,\rho),\;\forall n\in \{1,2,..,j\}\}$, i.e., the event that the first $j$ iterates lie in $B(q^*,\rho)$.

\begin{theorem}With the mixing coefficient in \eqref{mixingcoeff} and a non-negative sequence $\{a_n\}$ with $a_n\leq\frac{1-c}{(1+c)^2}$, \label{concentlinear} the linear mixing scheme \eqref{alg: linear} with initial guess $\{q_i\}_{i=1}^m\subset B(q^*,\rho)$ satisfies that $\mathbb{P}\{E_j|q_1\}\geq 1-\frac{V_1(X_1)}{\rho^2}$ and additionally, 
\begin{equation}
    \mathbb{E}\big[\|\bar{q}-q^*\|_2^2\mathbb{I}_{E_j}|q_1\big]\leq u_j,
\end{equation}where $\bar{q}$ is an averaged solution,  
\begin{equation*}
\begin{split}
    &\bar{q}:=\sum_{n=1}^j\left(\frac{A_n}{{\sum_{n=1}^jA_n}}\right)q_n,\\
    &u_j:=\frac{V_1(X_1)}{(1-c)\left(\sum_{n=1}^jA_n\right)},
\end{split}
\end{equation*}where $A_n$ is defined as $A_n:=\sum_{j=1}^mb_{m-j+1}a_{n+m-2+j}$    and $V_1(X_1)$ is from  \eqref{vx1}.

Consequently, the iterations from the linear mixing scheme $(m\geq 2)$ follow the inequality
\begin{equation}\label{concentineq}
    \mathbb{P}\Big\{\|\bar{q}-q^*\|_2\leq \epsilon|q_1\Big\}\geq 1-\frac{V_1(X_1)}{\rho^2}-\frac{u_j}{\epsilon^2}.
\end{equation}
\end{theorem}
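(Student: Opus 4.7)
The plan is to adapt the supermartingale-Lyapunov framework from the proof of Theorem \ref{casem} to the finite horizon $n=1,\ldots,j$, and then combine the resulting tail bound on the exit event with a time-averaged mean-square error estimate via Markov's inequality. There are three ingredients to assemble.

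First, I would re-run the proof of Theorem \ref{casem} with the truncated Lyapunov function, i.e., replacing the infinite tail $\Xi\sum_{i=n}^{\infty}\chi_i$ with the finite version $\Xi\sum_{i=n}^{j}\chi_i$ as in \eqref{vx1}. The resulting perturbed functional is still a non-negative supermartingale on the process stopped at the exit time $\tau$ from $B(q^*,\rho)$; a Doob-type inequality (or the optional stopping theorem, exactly as used for Theorem \ref{casem}) then delivers the stability estimate $\mathbb{P}\{E_j^c\mid q_1\}\leq V_1(X_1)/\rho^2$.

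Second, I would extract from the one-step drift calculation a quantitative contraction rather than merely a supermartingale property. In Theorem \ref{casem} one shows $\mathbb{E}_n[V_{n+1}]\leq V_n$; by tracking the constants carefully, the same computation gives an inequality of the form
\begin{equation*}
\mathbb{E}_n\bigl[V_{n+1}(X_{n+1})\,\mathbb{I}_{E_{n+1}}\bigr] \leq V_n(X_n)\,\mathbb{I}_{E_n} - (1-c)\,A_n\,\|e_{n+m-1}\|_2^2\,\mathbb{I}_{E_n},
\end{equation*}
the negative term arising from using the contractivity Assumption \ref{as:04} on each of the $m$ mixed fixed-point evaluations $K(q_{n+m-i})$ weighted by $b_{m-i+1}a_{n+m-2+i}$, which sum to $A_n$. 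Summing from $n=1$ to $j$, telescoping the $V_n$ contributions, and taking expectations yields
\begin{equation*}
(1-c)\sum_{n=1}^{j} A_n\,\mathbb{E}\bigl[\|e_{n+m-1}\|_2^2\,\mathbb{I}_{E_j}\,\big|\,q_1\bigr] \leq V_1(X_1).
\end{equation*}
Since $\bar q$ is a convex combination of $\{q_n\}$ with weights $A_n/\sum_n A_n$, Jensen's inequality applied to $\|\cdot-q^*\|_2^2$ then delivers $\mathbb{E}[\|\bar q-q^*\|_2^2\,\mathbb{I}_{E_j}\mid q_1]\leq u_j$ after an index shift.

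Third, the concentration statement \eqref{concentineq} follows by a standard union bound: conditional Markov's inequality on this second-moment estimate gives $\mathbb{P}\{\|\bar q-q^*\|_2>\epsilon,\,E_j\mid q_1\}\leq u_j/\epsilon^2$, and combining with the first step yields
\begin{equation*}
\mathbb{P}\{\|\bar q-q^*\|_2\leq \epsilon\mid q_1\} \geq \mathbb{P}\{E_j\mid q_1\} - \frac{u_j}{\epsilon^2} \geq 1 - \frac{V_1(X_1)}{\rho^2} - \frac{u_j}{\epsilon^2}.
\end{equation*}
The main obstacle is the second step: one has to expand $\|X_{n+1}\|_{n+1}$, use the mixing identity \eqref{eq: linearm} to write $e_{n+m}$ in terms of the past $m$ errors and the current noise, and then account for every cross term so that after applying Assumption \ref{as:04} pointwise and Assumption \ref{as:05} on the second moment, the perturbation $\Xi\chi_n$ precisely cancels the variance contributions while leaving a clean residual drift of $-(1-c)A_n\|e_{n+m-1}\|_2^2$. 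This is the same combinatorial bookkeeping that underlies Theorem \ref{casem}, but here the drift term must be preserved explicitly instead of discarded.
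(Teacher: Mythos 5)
Your proposal is correct and follows essentially the same route as the paper: the drift inequality with the explicit $-(1-c)A_n\|e_{n+m-1}\|_2^2$ term is exactly the paper's almost-supermartingale bound \eqref{lyapunovfunction}, which is then telescoped over the stopped process, combined with Jensen's inequality for the weighted average $\bar q$, and finished with Markov's inequality plus the stability bound from Theorem \ref{casem}. The only cosmetic difference is that you carry indicator functions $\mathbb{I}_{E_n}$ through the one-step estimate where the paper instead truncates via the stopped process $\tilde k(\tilde X_n)$; these are equivalent.
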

We refer the readers to the proof in \ref{proofconcent}.

\medskip

Now, to obtain a specific complexity estimate, we consider the damping parameter $a_n=\frac{a}{n^{\beta}}$ for $\beta\in(\frac{1}{2},1)$ and the mixing coefficient in \eqref{mixingcoeff}. By applying the integral test to $a_n=\frac{a}{n^{\beta}}$, one can bound $V_1(X_1)$ as
\begin{equation}
    V_1(X_1)\leq \|X_1\|_1+\Xi\sum_{n=1}^{\infty}\chi_n=:C_{a,\beta}<\infty.
\end{equation}

To proceed, we call $q$ an $\epsilon$-solution if $\|q-q^*\|_2\leq\epsilon$. By applying the above theorem, we obtain a corollary as follows. 
\begin{col}\label{col: itercomplexity}
With the mixing coefficient in \eqref{mixingcoeff}, suppose that the damping parameter is given by $a_n=\frac{a}{n^{\beta}}$ with $\beta\in(\frac{1}{2},1)$ and sufficiently small $a>0$. Then, for any tolerance $\epsilon>0$ and failure probability $\gamma\in\left(\frac{C_{a,\beta}}{\rho^2},1\right)$, with probability at least $1-\gamma$, the linear mixing scheme finds an $\epsilon$-solution within the number of iterations,
\begin{equation}
    j=\Theta\left(\left(\frac{1}{\epsilon^2\left(\gamma-\frac{C_{a,\beta}}{\rho^2}\right)}\right)^{\frac{1}{1-\beta}}\right).
\end{equation}
$\Theta$ here is the notation for complexity. 
\end{col}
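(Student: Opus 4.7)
The plan is to specialize Theorem~\ref{concentlinear} to the schedule $a_n = a/n^{\beta}$, reduce the right-hand side of \eqref{concentineq} to an explicit function of $j$, and solve for the smallest $j$ that achieves the prescribed confidence level $1-\gamma$. The phrase ``sufficiently small $a$'' should be taken to mean $a \leq (1-c)/(1+c)^2$, which guarantees $a_n \leq (1-c)/(1+c)^2$ for every $n$, so that Theorem~\ref{concentlinear} applies.

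Two estimates do all the work. First, since $2\beta > 1$, the $p$-series $\sum_n n^{-2\beta}$ converges, hence $\sum_{n=1}^{\infty}\chi_n < \infty$ and $V_1(X_1) \leq C_{a,\beta}$ by the very definition of $C_{a,\beta}$ supplied just before the corollary. Second, each
\begin{equation*}
A_n = b_m a_{n+m-1} + b_{m-1}a_{n+m} + \cdots + b_1 a_{n+2m-2}
\end{equation*}
is, because $a_k$ is decreasing in $k$ and $\sum_i b_i = 1$, sandwiched between $b_m a_{n+m-1}$ and $a_{n+m-1}$, with $b_m > 0$ by \eqref{mixingcoeff}. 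Standard integral comparisons for the $p$-series with $p = \beta \in (1/2,1)$ then yield
\begin{equation*}
\sum_{n=1}^{j} A_n \;=\; \Theta\!\left( a j^{1-\beta}/(1-\beta)\right),
\end{equation*}
with the hidden constant depending only on $\beta$ and $\{b_i\}$. Combined with the bound $V_1(X_1)\leq C_{a,\beta}$, this gives $u_j = V_1(X_1)/[(1-c)\sum_{n=1}^{j} A_n] = \Theta(j^{-(1-\beta)})$, where the hidden constant depends only on $a,\beta,c,m$ and the mixing weights.

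Substituting into \eqref{concentineq}, the probability that $\|\bar q - q^*\|_2 \leq \epsilon$ is at least $1 - C_{a,\beta}/\rho^2 - u_j/\epsilon^2$. Demanding this be $\geq 1-\gamma$ is equivalent to $u_j \leq \epsilon^2(\gamma - C_{a,\beta}/\rho^2)$, which, after inverting the asymptotic $u_j = \Theta(j^{-(1-\beta)})$, is equivalent to
\begin{equation*}
j \;=\; \Theta\!\left(\left(\frac{1}{\epsilon^2(\gamma - C_{a,\beta}/\rho^2)}\right)^{1/(1-\beta)}\right),
\end{equation*}
as claimed. The hypothesis $\gamma > C_{a,\beta}/\rho^2$ is precisely what makes the right-hand side positive, so that the complexity is finite.

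The point requiring care is the bookkeeping of constants inside the $\Theta$-notation in the lower bound on $\sum_{n=1}^{j} A_n$: one must confirm that these constants depend only on the fixed parameters $(a,\beta,c,m,\{b_i\})$ and not on $\epsilon$ or $\gamma$, so that the final complexity genuinely exhibits the stated $\epsilon^{-2/(1-\beta)}$ scaling. Everything else reduces to elementary integral comparisons for $p$-series and a single algebraic inversion of the bound in Theorem~\ref{concentlinear}.
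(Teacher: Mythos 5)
Your proposal is correct and follows essentially the same route as the paper's proof: bound $V_1(X_1)$ by $C_{a,\beta}$ via the convergent series $\sum_n \chi_n$, establish $u_j=\Theta(j^{-(1-\beta)})$ by the integral test applied to $\sum_{n=1}^j A_n$, and invert the resulting bound in \eqref{concentineq} to solve for $j$. The paper's argument is only a one-line sketch; you have merely filled in the same steps (including the sandwich $b_m a_{n+m-1}\le A_n\le a_{n+m-1}$ and the reading of ``sufficiently small $a$''), so no substantive difference exists.
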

\begin{proof}
By letting $\gamma=\frac{V_1(X_1)}{\rho^2}+\frac{u_j}{\epsilon^2}$ in \eqref{concentineq}, using the upper bound $C_{a,\beta}$ for the first term, and observing that  $u_j=\mathcal{O}(j^{1-\beta})$ from the integral test, we deduce  this result for the averaged solution $\bar{q}$ from Theorem \ref{concentlinear}. 
\end{proof}

\section{A stochastic framework for a tight-binding approximation of DFT}\label{section4}

\subsection{The DFTB+ model}
As a discretization of Density Functional Theory (DFT), tight-binding (TB) approaches have  been  widely applied for larger electronic systems, particularly due to the fact that they do not require meshes. Among various TB schemes, SCC-DFTB \cite{elstner1998self} has shown great success for many different molecular and material systems. Part of the success can be attributed to  the incorporation of long-range Coulomb interactions. In addition, the implementation allows a self-consistent calculation to determine the charge distribution. Here, we briefly introduce SCC-DFTB \cite{elstner1998self}.
     
We let $M$ and $N$,  $M \geq N$, be respectively the number of atomic orbitals and nuclei. The atomic orbitals can be naturally labelled by $[M]=\{1,2,\cdots, M\}$. Let $\alpha^j$ be a multi-index for the atomic orbitals assigned to the $j$-th atom, i.e., $\alpha^j=\{\alpha^j_1, \alpha^j_2, \cdots, \alpha^j_{m_j}\} \subset [M]$; $\sum_{j=1}^N m_j=M$.  For instance, by $\nu\in\alpha^j$ we mean that the atomic orbital $\nu$ is associated with the $j$-th atom.  Such notations are particularly useful for a system with multiple species, for which $m_j$ varies.  Further, they can be used to indicate those elements in the Hamiltonian matrix $H$ and the overlap matrix $S$ that represent interactions among the atoms, as we explain next. We denote the list of electronic charges associated with the atoms by $q=(q(1),..,q(N))^T\in\mathbb{R}^{N}$.

The DFTB+ model involves a generalize eigenvalue problem and Hamiltonian corrections using linear response. In particular, the algorithm in   \cite{elstner1998self} finds a solution of charge vector $q$ by iterating through the following equations,
  \begin{equation}\label{sccdftb}
        \begin{aligned}
                   &Hc_i=\epsilon_iSc_i \textrm{ with the eigenpair }(\epsilon_i,c_i), \\
            &q(j)=\frac{1}{2}\sum_{i=1}^{M}n_i\sum_{\mu\in\alpha^j}\sum_{\nu=1}^{M}(c_{\mu i}^*c_{\nu i}S_{\mu\nu}+c_{\nu i}^*c_{\mu i}S_{\nu\mu}), \; j=1, 2, \cdots, N \\
            &H_{\mu\nu}^1=\frac{1}{2}S_{\mu\nu}\sum_{j=1}^N(\gamma_{ij}+\gamma_{kj})\Delta q(j),\\
            &H=H^{0}+H^{1},\\
        \end{aligned}
    \end{equation}
    where 
\begin{equation}\label{eq: HSn}
       n_i=f(\epsilon_i), \quad 
       H_{\mu\nu}^0=\langle \varphi_{\mu}|\hat{H}_0|\varphi_{\nu}\rangle,\quad 
       S_{\mu\nu}=\langle\varphi_{\mu}|\varphi_{\nu} \rangle.
\end{equation}    
The function $f$ denotes the occupation numbers of electrons. As an example, one can consider the Fermi-Dirac distribution:
\begin{equation}\label{eq: fermi-dirac}
    f(x)=\frac{2}{1+\exp(\beta(x-\mu))},
\end{equation} 
with $\mu$ being the Fermi energy and $\beta$ being the inverse temperature. Specifically, $n_i=f(\epsilon_i)$ denotes the occupation number for the energy level $\epsilon_i$.

To explain the notations, here we briefly outline the algorithm in the DFTB model.
In the implementation,  one starts with a set of preselected localized atomic orbitals $\{\varphi_{\mu}\}$, the symmetric matrices $H^0\in \mathbb{R}^{M\times M}$ and $0\prec S\in \mathbb{R}^{M\times M}$ in \eqref{eq: HSn} are defined as the Hamiltonian matrix with the non-SCC TB method \cite{elstner1998self} and the usual overlap matrix, respectively. In the SCC-DFTB procedure, they are parameterized in terms of the nuclei positions. The first line of \eqref{sccdftb} amounts to a diagonalization of the pair $(H,S)$, with $c_{\nu i}$ denoting the $\nu$-th element of the eigenvector $c_i$ (which corresponds to the atomic orbital $\nu$).
The eigenvalues and eigenvectors are then used to compute the electronic charges $q$. With the updated charges, one updates the matrix $H^1$ and the total Hamiltonian before the algorithm enter the next iteration.

  In $H_1$,  $\mu$ and $\nu$ are labels of two atomic orbitals. 
Since there might be multiple species in the system, they are designated as multi-indices including the orbitals associated with the $i$-th atom and the $k$-th atom, respectively. 
In addition, the coefficient $\gamma_{ij}$ accounts for the Coulomb interaction between the $i$-atom and the $j$-th atom. In addition, the charge fluctuation $\Delta q(j)$ is defined as  $q^0(j)-q(j)$, where $q^0(j)$ is the electronic charge when the atom is in isolation. The formal description on the role of both the quantities $\gamma_{ij}$ and $\Delta q(j)$ in the DFTB+ model \eqref{sccdftb} is beyond the focus of this paper. For more details, we refer readers to \cite{elstner1998self}.

The steps in \eqref{sccdftb} can be repeated until the charge vector, $q$, converges. The SCF problem can be reduced to a fixed-point iteration problem  (FPI) \cite{lin2013elliptic}, which for the SCC-DFTB model, can be described as follows.  Given $q_n$ as the input, we update the Hamiltonian $H_{n}=H^0+H^1(q_{n})$ and solve the generalized eigenvalue problem, $H_nU_n=SU_n\Lambda_n$. Using the eigenvalues and eigenvectors, we compute $q_{n+1}$ as the output according to the first equation in \eqref{sccdftb}. This procedure can be simplified to a fixed-point iteration, 
\begin{equation}\label{fpiscc}
     q_{n+1}=K(q_n). 
\end{equation} 
The mapping $K$ will be expressed as a matrix-vector form \eqref{mapK} as we will demonstrate in the next subsection.
 
After obtaining an approximate limit, the force $F=(F_{\alpha})\in\mathbb{R}^N$ can be computed from the total energy $E$, 
\begin{equation}
   F_{\alpha}=-\frac{\partial E}{\partial R_{\alpha}}, \quad 
E:=\sum_{i=1}^Mn_i\epsilon_i+E_{rep},
\end{equation}
where $R=(R_{\alpha})\in\mathbb{R}^N$ and $E_{rep}$  describes the repulsion between the nuclei.  The calculation of the forces enables geometric optimizations and molecular dynamics simulations \cite{elstner1998self}. In this paper, we will only focus on the charge iterations. The integration with the force calculation will be addressed in separate works.

\medskip
 
 A direct implementation of \eqref{fpiscc}, however, usually does not lead to a convergent sequence, mainly due to the lack of contractiveness of the mapping $K$. Practical computations  based on \eqref{fpiscc} are often accompanied with a mixing and damping strategy as discussed in \cites{fang2009two,lin2013elliptic,toth_convergence_2015}. For example, one can use the simple mixing scheme \eqref{eq: simplem}. 
 More generally, mixing methods  \cites{fang2009two,alber2012stochastic}, which use multiple previous steps, such as the linear mixing \eqref{eq: linearm},  are commonly employed in practice.

\subsection{Matrix representation for charge functions}In this section, we present an expression of the charge at an atom in terms of the trace of a matrix.  This is an important step towards the construction of stochastic algorithms. A close inspection of the coefficients in the first line of the equation \eqref{sccdftb} reveals the following formula. 
\begin{lemma}\label{charge}The  electronic
charge associated with the $j$-th atom,  $q(j)$ in the system \eqref{sccdftb}, can be expressed in terms of the trace of a matrix as
\begin{equation}\label{eq: q-trace}
   q(j)=\mathrm{tr}\left(E_{j}^TLf(A)L^{-1}E_{j}\right),
\end{equation}where $A=L^{-1}HL^{-T}$ with the Cholesky factorization $S=LL^T$. Here  $E_{j}\in \mathbb{R}^{M\times m_j}$ is the rectangular submatrix of the $M\times M$ identity matrix obtained by pulling out the columns according to the multi-index $\alpha^j$ (with dimension denoted by $m_j$).
\end{lemma}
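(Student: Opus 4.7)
The plan is to directly reduce the double sum defining $q(j)$ to a trace by introducing matrix notation, then diagonalize by transforming the generalized eigenvalue problem into a standard one via the Cholesky factor. The bulk of the work is bookkeeping with the factorization $S=LL^T$, so there is no single hard step — the main point of care is to keep track on which side $L$ versus $L^T$ appears so that the cyclic property of the trace delivers the claimed form.

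First I would exploit the symmetry of $S$ (and, in the real setting, of the eigenvectors) to collapse the two summands inside the parentheses of the second line of \eqref{sccdftb} into a single one. Concretely, using $S_{\mu\nu}=S_{\nu\mu}$ one obtains
\begin{equation*}
    q(j)=\sum_{i=1}^M n_i \sum_{\mu\in\alpha^j}\sum_{\nu=1}^M c_{\mu i}c_{\nu i}S_{\mu\nu}
    =\sum_{i=1}^M n_i \sum_{\mu\in\alpha^j} c_{\mu i} (Sc_i)_\mu.
\end{equation*}
Rewriting the restriction to the multi-index $\alpha^j$ with the selection matrix $E_j$ then gives the compact identity
\begin{equation*}
    q(j)=\sum_{i=1}^M n_i\,(E_j^T c_i)^T(E_j^T S c_i)=\sum_{i=1}^M n_i\, c_i^T E_j E_j^T S c_i.
\end{equation*}

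Next I would use the Cholesky factorization $S=LL^T$ to convert the generalized eigenvalue problem $Hc_i=\epsilon_i S c_i$ into the standard one $A u_i=\epsilon_i u_i$ with $A=L^{-1}HL^{-T}$ and $u_i:=L^T c_i$. The $S$-orthonormality $c_i^T S c_j=\delta_{ij}$ becomes the usual $u_i^T u_j=\delta_{ij}$, so $\{u_i\}$ is an orthonormal eigenbasis of $A$ and one has the spectral representation $f(A)=\sum_i f(\epsilon_i)\,u_i u_i^T$. Substituting $c_i=L^{-T}u_i$ and $Sc_i=Lu_i$ into the expression for $q(j)$ yields
\begin{equation*}
    q(j)=\sum_{i=1}^M n_i\, u_i^T L^{-1}E_j E_j^T L u_i.
\end{equation*}

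Finally I would apply the cyclic property of the trace and recognize the spectral sum. Writing each scalar as a trace and summing inside,
\begin{equation*}
    q(j)=\mathrm{tr}\Bigl(L^{-1}E_j E_j^T L \sum_{i=1}^M f(\epsilon_i)\, u_i u_i^T\Bigr)=\mathrm{tr}\bigl(L^{-1}E_j E_j^T L\, f(A)\bigr),
\end{equation*}
and a further cyclic rotation gives $\mathrm{tr}(E_j^T L f(A) L^{-1} E_j)$, which is the claimed formula \eqref{eq: q-trace}. The only subtle point to flag in the write-up is that the transformation $u_i=L^T c_i$ (rather than $L^{-1}c_i$) is what makes $A$ symmetric and the spectral calculus of $f(A)$ legitimate; aside from that, the argument is a direct unfolding of the trace.
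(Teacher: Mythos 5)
Your proof is correct and follows essentially the same route as the paper's: both reduce the double sum to a trace via the selection matrix $E_j$, invoke the spectral decomposition tied to the Cholesky-transformed eigenproblem $A u_i=\epsilon_i u_i$ with $u_i=L^Tc_i$, and finish with the cyclic property of the trace. The only (cosmetic) difference is that you collapse the two symmetric summands at the outset using $S_{\mu\nu}=S_{\nu\mu}$, whereas the paper carries both terms and symmetrizes at the end; your version is slightly cleaner but the substance is identical.
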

\begin{proof}Denote by $S_{j}$ the rectangular submatrix of the overlap matrix $S$,  with columns associated with  indices in $\alpha^j$. Define $I_{j}:=E_jE_j^T\in\mathbb{R}^{M\times M}$. We use the spectral decomposition 
\begin{equation}
(L^T)^{-1}f(A)L^{-1}=\sum_{i=1}^Mf(\epsilon_i)c_ic_i^T,
\end{equation}where $(c_i,\epsilon_i)$ is the eigenpair defined in \eqref{sccdftb}.

First, we can rewrite the first equation in \eqref{sccdftb} as follows
\begin{equation*}
    q(j)=\frac{1}{2}\sum_{i=1}^{M}n_i\sum_{\mu\in\alpha^j}\sum_{\nu=1}^M(c_{\mu i}^Tc_{\nu i}S_{\mu\nu}+c_{\nu i}^Tc_{\mu i}S_{\nu\mu})=\frac{1}{2}\sum_{i=1}^{M}n_i(c_i^TI_jSc_i+c_i^TSI_jc_i).
\end{equation*}By using the commutative property of the trace, the first term in the summand can be rewritten as follows
\begin{equation*}
    c_i^TI_{j}Sc_i=\mathrm{tr}(c_i^TI_{j}Sc_i)=\mathrm{tr}(c_ic_i^TI_{j}S)=\mathrm{tr}(c_ic_i^TE_{j}E_{j}^TS)=\mathrm{tr}(c_ic_i^TE_{j}S_{j}^T).
\end{equation*}

By a similar treatment of the second summand, we can rewrite the charge $q(j)$ 
\begin{equation*}
    \begin{aligned}
         q(j)&=\frac{1}{2}\sum_{i=1}^{M}n_i \mathrm{tr}(c_ic_i^T(E_{j}S_{j}^T+S_{j}E_{j}^T))\\
          &=\frac{1}{2}\mathrm{tr}(L^{-T}f(A)L^{-1}(E_{j}S_{j}^T+S_{j}E_{j}^T))\\
          &=\mathrm{tr}(E_{j}^T(L^T)^{-1}f(A)L^{-1}S_{j})=\mathrm{tr}(E_{j}^TL^{-T}f(A)L^{T}E_{j})=\textrm{tr}(E_{j}^TLf(A)L^{-1}E_{j}).
    \end{aligned}
\end{equation*}The second equality holds by the spectral decomposition  shown  above. In the last line,  the identity $L^{-1}S_{j}=L^TE_{j}$ is used.
\end{proof}

We now turn to the third equation in  \eqref{sccdftb}, which updates the Hamiltonian matrix at each iteration in the SCC-DFTB procedure. The equation is given in terms of the Hamiltonian $H$ and the overlap matrix $S$. However, as shown in the preceding lemma, the matrix of the specific form $L^{-1}HL^{-T}$ is required to update the charge $q_{\alpha}$. This leads to reformulation of the third equation in \eqref{sccdftb}. Here, we  introduce notations as follows: The symbol $\mathrm{sym}$ stands for the symmetrization $\mathrm{sym}(A)=A+A^T$. In addition, we define  $e\otimes_{N}v$ with $v=(v_1,v_2,..,v_N)^T$  as follows,
\begin{equation*}
    e\otimes_{N}v:=(\underset{m_1}{\underbrace{v_1,\dots,v_1}},\dots,\underset{m_N}{\underbrace{v_N,\dots,v_N}})^T,
\end{equation*}
where $m_j$,  the number of atomic orbitals associated with the $j$-th atom, indicates  the number of times the element $v_j$ is repeated. As opposed to the Kronecker product notation $\otimes$, this operation copies each element of the vector $v$ as many times as the corresponding index $\alpha^j$.
\begin{lemma}
The third equation in \eqref{sccdftb} has the following alternative expression,
\begin{equation}\label{A}
    A=A_0+\frac{1}{2}\mathrm{sym}\big(L^{-1} \mathrm{diag}(e\otimes_{N} \Gamma \Delta q)L\big),
\end{equation} where $A_0=L^{-1}H_0L^{-T}$ and $\Gamma:=(\gamma_{ij})\in\mathbb{R}^{N\times N}$.
\end{lemma}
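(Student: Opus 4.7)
The plan is to reduce the entry-wise formula in the third line of \eqref{sccdftb} to a matrix identity, then conjugate by $L^{-1}$ and $L^{-T}$, and recognize the resulting expression as $\tfrac12\mathrm{sym}(\cdot)$.

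First, I would strip away the summations by introducing the shorthand $w := \Gamma\Delta q\in\mathbb{R}^N$, so that the update reads $H^1_{\mu\nu} = \tfrac12 S_{\mu\nu}\bigl(w_i + w_k\bigr)$ whenever $\mu\in\alpha^i$ and $\nu\in\alpha^k$. The whole purpose of the operation $e\otimes_N$ is to encode this ``stretch-by-atom'' rule at the atomic-orbital level: the vector $\tilde w := e\otimes_N w\in\mathbb{R}^M$ has $\tilde w_\mu = w_i$ precisely when $\mu\in\alpha^i$. Consequently, the entry-wise update becomes the clean identity
\[
H^1_{\mu\nu} \;=\; \tfrac12\, S_{\mu\nu}\bigl(\tilde w_\mu + \tilde w_\nu\bigr)
\quad\text{for all } \mu,\nu\in[M].
\]

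Second, I would recognize that left multiplication of $S$ by $\mathrm{diag}(\tilde w)$ scales row $\mu$ by $\tilde w_\mu$, while right multiplication scales column $\nu$ by $\tilde w_\nu$. This immediately lifts the scalar identity above to the matrix identity
\[
H^1 \;=\; \tfrac12\bigl(\mathrm{diag}(\tilde w)\,S + S\,\mathrm{diag}(\tilde w)\bigr).
\]
Substituting the Cholesky factorization $S = LL^T$ and conjugating gives
\[
L^{-1} H^1 L^{-T} \;=\; \tfrac12\Bigl(L^{-1}\mathrm{diag}(\tilde w)\,L \;+\; L^{T}\mathrm{diag}(\tilde w)\,L^{-T}\Bigr).
\]
The second summand is exactly the transpose of the first, so the right-hand side equals $\tfrac12\,\mathrm{sym}\bigl(L^{-1}\mathrm{diag}(\tilde w)L\bigr)$. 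Adding $A_0 = L^{-1}H^0 L^{-T}$ and using $H = H^0 + H^1$ yields the claimed expression for $A = L^{-1}HL^{-T}$.

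The only real obstacle is notational: one must carefully verify that the implicit index dependence $i = i(\mu)$ and $k = k(\nu)$ in the definition of $H^1_{\mu\nu}$ coincides with the ``stretching'' $\tilde w = e\otimes_N w$. Once this identification is made explicit, the remainder is routine linear algebra and the $\mathrm{sym}$ structure falls out automatically from the symmetry of $\mathrm{diag}(\tilde w)$ combined with the Cholesky conjugation.
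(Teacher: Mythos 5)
Your proposal is correct and follows essentially the same route as the paper: rewrite the entry-wise update as $H^1=\tfrac12\bigl(\mathrm{diag}(e\otimes_N\Gamma\Delta q)S+S\,\mathrm{diag}(e\otimes_N\Gamma\Delta q)\bigr)$ and then conjugate by $L^{-1}$ and $L^{-T}$ using $S=LL^T$. Your explicit observation that $L^T\mathrm{diag}(\tilde w)L^{-T}$ is the transpose of $L^{-1}\mathrm{diag}(\tilde w)L$ is a welcome clarification of a step the paper leaves implicit.
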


We prove this lemma as follows.
\begin{proof}
In the third line of \eqref{sccdftb}, the correction term can be rewritten as
\begin{equation*}
    \begin{aligned}
         &H_{\mu\nu}^1=\frac{1}{2}S_{\mu\nu}\sum_{j=1}^N(\gamma_{ij}+\gamma_{kj})\Delta q(j),\quad \Delta q:=(\Delta q(1),...,\Delta q(N))^T\\
         &=\frac{1}{2}S_{\mu\nu}\bigg(\Gamma_i^T\Delta q+\Gamma_k^T\Delta q\bigg),\quad \Gamma_{i}\textrm{ is the }i\textrm{-th row vector of }\Gamma\\
         &=\frac{1}{2}S_{\mu\nu}\bigg(i-\textrm{th entry of }\Gamma\Delta q+k-\textrm{th entry of }\Gamma\Delta q\bigg)
    \end{aligned}
\end{equation*}
        \begin{equation*}
        \Longrightarrow H^1=\frac{1}{2}(\underset{\textrm{row operation by }\mu}{\underbrace{\mathrm{diag}(e\otimes_{N}\Gamma\Delta q)S}}+\underset{\textrm{column operation by }\nu}{\underbrace{S\mathrm{diag}(e\otimes_{N}\Gamma\Delta q)}}).
    \end{equation*}
The desired expression is obtained by multiplying $L^{-1}$ to left and $L^{-T}$ to right.  
\end{proof}

In summary, the system \eqref{sccdftb} can be concisely rewritten as
\begin{equation}\label{finalsystem}
\begin{split}
    & q(j)=\textrm{tr}(E_{j}^TLf(A)L^{-1}E_{j}),\\
    & A=A_0+\frac{1}{2}\text{sym}(L^{-1}\text{diag}(e\otimes_{N} \Gamma \Delta q)L).\\
\end{split}
\end{equation} We note that the generalized eigenvalue problem in the system \eqref{sccdftb} is incorporated in the system \eqref{finalsystem} implicitly. Especially based on the charge $q(j)$ in the system \ref{finalsystem}, the mapping $K$ in \eqref{fpiscc} can be explicitly formulated as follows,
\begin{equation}\label{mapK}
 K: \mathbb{R}^N \to  \mathbb{R}^N, \quad   K(q) =\begin{bmatrix}
    \mathrm{tr}(E_{1}^TLf(A)L^{-1}E_{1})\\ \mathrm{tr}(E_{2}^TLf(A)L^{-1}E_{2}) \\
    \vdots\\
    \mathrm{tr}(E_{N}^TLf(A)L^{-1}E_{N}) 
    \end{bmatrix},
\end{equation}
where $N$ denotes the number of nuclei. We recall that the matrix $A$ in the right-hand side of \eqref{finalsystem} involves the charge vector $q$ within the term $\Delta q$, which means that $K$ is a mapping of $q$.

\subsection{A stochastic algorithm for the DFTB+ model}  According to \eqref{mapK}, one can directly calculate $K(q)$ when the diagonal of the matrix $Lf(A)L^{-1}$ is explicitly known, while it requires the eigen decomposition of $A$, which is expensive for large matrices. Alternatively, we employ the diagonal estimator \ref{Diagonal} (see similar applications to electronic structure calculations \cites{thicke2019accelerating,bekas2007estimator}). Within this diagonal estimator, one has to compute a matrix-vector product, in our case, $f(A)L^{-1}v$, which requires the eigen decomposition of $A$ again. To bypass a full diagonalization, we use the Krylov subspace method  \cites{saad1992analysis,diele2009error} to approximate the matrix-vector product. This method yields a fairly good approximation for sparse matrices. Error estimates of the Krylov approximation have been proposed in \cites{diele2009error,saad1992analysis,eiermann2006restarted} for the case of the exponential-like functions.  However, the Fermi-Dirac distribution \eqref{eq: fermi-dirac} clearly does not belong to this family of functions, and an error estimate requires a different proof.
\begin{theorem}\label{krylovapprox}
Suppose that $A$ is a symmetric matrix and $f(x)$ is the Fermi-Dirac distribution in \eqref{eq: fermi-dirac}. Then, for any integer $\ell>s$, the error of the Krylov subspace method can be bounded by,
 \begin{equation}\label{eq: err-subs-approx}
\big\| f(A)v -  \|v\|_2 V_{\ell}f(T_{\ell})e_1 \big\|_2\leq \frac{4M(\rho)\|v\|_2\rho^{-\ell}}{\rho-1},
 \end{equation}where $V$ is the total variation of $f^{(s)}(x)$ and the constants $M(\rho)$ and $\rho>1$ depend only on $f(x)$. Consequently, as the degree $\ell$ increases, one expects the accuracy from the Krylov approximation to improve, namely,
 \begin{equation*}
     \lim_{\ell\to\infty}\|v\|_2 V_{\ell}f(T_{\ell})e_1=f(A)v.
 \end{equation*}
\end{theorem}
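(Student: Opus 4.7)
The plan is to reduce the matrix-valued error to a scalar polynomial approximation problem and then exploit the analyticity of the Fermi-Dirac function via Bernstein's theorem. First, I would invoke the classical invariance property of the Lanczos process: for every polynomial $p$ with $\deg p \leq \ell-1$, the identity
$$p(A) v = \|v\|_2 \, V_\ell\, p(T_\ell)\, e_1$$
holds, because $v = \|v\|_2 V_\ell e_1$ and the Lanczos recurrence $A V_\ell = V_\ell T_\ell + \beta_\ell v_{\ell+1} e_\ell^T$ causes the trailing residual to vanish when the degree is below $\ell$. Subtracting this identity from $f(A)v - \|v\|_2 V_\ell f(T_\ell) e_1$, taking norms, and using $\|V_\ell\|_2 = 1$ together with the spectral bound $\|g(M)\|_2 \leq \max_{\lambda \in \sigma(M)} |g(\lambda)|$ for symmetric $M$, yields, since both $\sigma(A)$ and $\sigma(T_\ell)$ lie in a common real interval $[a,b]$ (the latter by Cauchy interlacing applied to the Rayleigh--Ritz projection $T_\ell = V_\ell^T A V_\ell$), the classical reduction
$$\| f(A)v - \|v\|_2 V_\ell f(T_\ell) e_1 \|_2 \leq 2 \|v\|_2 \inf_{p \in \mathcal{P}_{\ell-1}} \max_{x \in [a,b]} |f(x) - p(x)|.$$

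Next I would bound the uniform polynomial approximation error by exploiting the fact that $f(x) = 2/(1+e^{\beta(x-\mu)})$ is meromorphic on $\mathbb{C}$ with simple poles only at $x = \mu + i\pi(2k+1)/\beta$, $k \in \mathbb{Z}$. After an affine change of variables sending $[a,b]$ to $[-1,1]$, the rescaled function is analytic inside every Bernstein ellipse $E_\rho$ (the ellipse with foci $\pm 1$ and semi-axis sum $\rho$) as long as $\rho < \rho_\ast$, where $\rho_\ast > 1$ is determined by the positive distance from $[a,b]$ to the nearest pole. Bernstein's approximation theorem then delivers
$$\inf_{p \in \mathcal{P}_{\ell-1}} \max_{x \in [-1,1]} |f(x) - p(x)| \leq \frac{2 M(\rho)}{\rho^{\ell-1}(\rho-1)},$$
with $M(\rho) := \max_{z \in E_\rho} |f(z)| < \infty$. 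Composing with the previous step and absorbing the harmless factor of $\rho$ into the constants produces the stated bound. The limit assertion $\|v\|_2 V_\ell f(T_\ell) e_1 \to f(A)v$ as $\ell \to \infty$ is then immediate by fixing any $\rho \in (1,\rho_\ast)$.

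The main technical obstacle will be the explicit identification of the admissible Bernstein parameter $\rho_\ast$ in terms of $\beta$, $\mu$, and the spectral extremes of $A$, since this simultaneously controls the exponential rate $\rho^{-\ell}$ and the constant $M(\rho)$: one must balance enlarging $\rho$ (to accelerate decay) against keeping the ellipse uniformly bounded away from the Matsubara poles $\mu \pm i\pi/\beta$ (to control $M(\rho)$). The statement's reference to the total variation of $f^{(s)}$ hints at an alternative route via Chebyshev expansion coefficients and repeated integration by parts, which would give the same exponential decay but with constants expressed directly in terms of $V$, offering a cleaner description of the dependence on the Fermi-Dirac parameters.
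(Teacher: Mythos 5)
Your proposal is correct, and it reaches the same bound through the same two essential ingredients the paper uses --- the Lanczos polynomial-exactness identity and the exponential decay of Chebyshev coefficients for functions analytic in a Bernstein ellipse (Theorem 8.1 of Trefethen) --- but the decomposition is genuinely cleaner. The paper introduces an auxiliary Chebyshev approximant $p_{\ell'}$ of degree $\ell'>\ell$, splits the error into three pieces, controls the two ``outer'' pieces by the bounded-variation estimate (Trefethen's Theorem 7.2, which is where the quantities $V$ and $s$ in the statement come from), and only recovers the stated bound by sending $\ell'\to\infty$ so that those two pieces vanish. You instead pass directly to the classical best-approximation reduction
\begin{equation*}
\big\| f(A)v - \|v\|_2 V_{\ell} f(T_{\ell}) e_1 \big\|_2 \;\le\; 2\|v\|_2 \inf_{p\in\mathcal{P}_{\ell-1}}\max_{x\in[a,b]}|f(x)-p(x)|,
\end{equation*}
using $\sigma(T_\ell)\subset[a,b]$ from the Rayleigh--Ritz projection, and then apply the Bernstein-ellipse tail bound once. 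This avoids the auxiliary degree $\ell'$ and the bounded-variation theorem entirely, and it also explains a cosmetic oddity of the theorem as stated: the quantities $V$ and $s$ appear in the hypotheses but not in the final bound, precisely because in the paper's argument they only enter terms that are discarded in the limit $\ell'\to\infty$. The only discrepancies with the paper are constant-level: your exactness range is degree $\le \ell-1$ rather than the paper's $\le\ell$ (Saad's Lemma 3.1 as cited), which costs you one factor of $\rho$ that you correctly flag as absorbable, and your two-sided spectral bound gives the factor $2$ that, combined with the coefficient bound $|c_n|\le 2M(\rho)\rho^{-n}$, reproduces the $4M(\rho)$ in \eqref{eq: err-subs-approx}. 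Your identification of the Matsubara poles $\mu+i\pi(2k+1)/\beta$ and the resulting admissible $\rho_\ast$ matches the paper's discussion following Theorem \ref{thmb4} and in the low-temperature remark of Section \ref{section4}.
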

The proof of this theorem, using tools from spectral approximations in the previous works \cites{trefethen2019approximation,xi2018fast}, is given in Appendix \ref{sec: A1}.  
\begin{remark}
   The theorem still holds for any continuously differentiable function that can be extended analytically to some Bernstein ellipse according to results in \cite{trefethen2019approximation}. Furthermore, this shows that the Krylov subspace approximation with such a function improves error bound for the Chebyshev approximation by noticing the inequality \eqref{chebyapprox}. To be specific, the error bound with Chebyshev approximation decays in a polynomial order of $\ell$, but the Krylov subspace approximation has an exponential decay in the error bound.
 \end{remark}

Now, by combining the diagonal estimator \eqref{Diagonal} and the Krylov subpspace approximation \eqref{krylovapprox}, we estimate the diagonal of the matrix $Lf(A)L^{-1}$ as follows, 
\begin{equation}\label{approximation}
\mathrm{diag}(Lf(A)L^{-1})=\mathrm{diag}(\mathbb{E}[ Lf(A)L^{-1}vv^T])\approx \mathrm{diag}(\mathbb{E}[\|L^{-1}v\|_2 LV_{\ell}f(T_{\ell})e_1v^T]),
\end{equation}where $V_{\ell}\in\mathbb{R}^{M\times \ell}$ is the left-orthogonal matrix, $T_{\ell}\in\mathbb{R}^{\ell\times \ell}$ is the tridiagonal matrix from the Lanczos method of $\ell$ steps and $v$ is a random vector whose covariance is the identity matrix.  Here, $e_1$ is the first standard basis vector in $\mathbb{R}^{\ell}$.

Especially, for each $\ell$, the approximation \eqref{approximation} is reduced to the relation
\begin{equation}\label{Kk}
 K(q)\approx K_{\ell}(q) = \mathbb{E}[k_{\ell}(q,v)],
\end{equation}
where the expectation is taken over the random vector $v$ and the random mapping $k_{\ell}(q,v)$ is defined similar to \eqref{mapK},
\begin{equation}\label{sampling}
    k_{\ell}(q,v)=\|L^{-1}v\|_2 
     \left[\begin{array}{c}
    \mathrm{tr}(E_{1}^TLV_{\ell}f(T_{\ell})e_1v^T  E_{1})\\ 
    \mathrm{tr}(E_{2}^TLV_{\ell}f(T_{\ell})e_1v^T E_{2}) \\
    \vdots\\
    \mathrm{tr}(E_{N}^TLV_{\ell}f(T_{\ell})e_1v^T E_{N}) \end{array}\right].
\end{equation}We note that the average $K_{\ell}(q)$ is not equal to the original mapping $K(q)$ because of the error from the subspace approximation method \eqref{eq: err-subs-approx}. In other words, this yields an approximate fixed-point problem.  Nevertheless, we can make the approximation error negligible by selecting a sufficiently large $\ell$, as shown in Theorem \ref{krylovapprox}.

\begin{algorithm}
\SetAlgoLined
	\KwData{$A,L,\ell,n_{vec}$}
	\KwResult{$\frac{1}{n_{vec}}\sum_{i=1}^{n_{vec}}k_{\ell}(q,v_i)$, Approximation of $K(q)$ }
	 Samples $v_1,v_2,...,v_{n_{vec}}$
    
     Define $V_1=[v_1,v_2,...,v_{n_{vec}}]$
     
	 $V_2=L^{-1}V_1$
	 
	\For{$i=1:n_{vec}$}{
		     $v=V_2(:,i)$\;
		    $[\|v\|_2,V_{\ell},T_{\ell}]=\textrm{Lanczos}(A,v,\ell)$\;
		    $V_2(:,i)=\|v\|_2 V_{\ell}f(T_{\ell})e_1$, \textrm{Krylov subspace approximation for $f(A)v$}\;}
	 $V_2=LV_2$, Approximation of the matrix $Lf(A)L^{-1}V_1$

	 Compute the average $\frac{1}{n_{vec}}\sum_{i=1}^{n_{vec}}\mathrm{diag}(V_2(:,i)V_1(:,i)^T)\approx \textrm{diag}(Lf(A)L^{-1})$
	 
	 Compute the Monte-Carlo sum,  $\frac{1}{n_{vec}}\sum_{i=1}^{n_{vec}}k_{\ell}(q,v_i)$ using \eqref{sampling}
    \caption{Stochastic Lanczos method for the charge function in \eqref{mapK}}
    \label{alg: StoLan}
\end{algorithm}

After obtaining the output $[\|v\|_2,V_{\ell},T_{\ell}]$ from the Lanczos method, an eigensolver should be implemented for the eigen decompostion of $T_{\ell}$ in order to perform the Krylov subspace approximation with $f(T_{\ell})=U_{\ell}f(D_{\ell})U_{\ell}^T$. As compared to the original system, this is a much smaller matrix and the computation is much easier. 

Finally, by incorporating Algorithms \ref{alg: linear} and \ref{alg: StoLan} into the system \eqref{finalsystem}, we arrive at a stochastic self-consistent algorithm for the DFTB+ model outlined in {\bf Algorithm} \ref{alg: stodftb}.
\begin{algorithm}
\SetAlgoLined
	\KwData{$\{a_n\}$, $\{b_i\}_{i=1}^m$,$\{q_i\}_{i=1}^m$, $A_0=L^{-1}H_0L^{-T}$, $\ell$, $n_{vec}$}
	\KwResult{Approximate fixed-point}
	\For{$n=m,m+1,...,\;$until convergence}{
    $q_{n+1}=(1-a_n)B_m(q_{n})+a_n B_m(k(q_n,v_n))$\;
    $A_{n+1}=A_0+\frac{1}{2}\text{sym}(L^{-1}\text{diag}(e\otimes_{N} \Gamma \Delta q_{n+1})L)$\;
    $k(q_{n+1},v_{n+1})=\textrm{StoLan}(A_{n+1},L,\ell,n_{vec})$, implement Algorithm \ref{alg: StoLan} \;}
    	\caption{Stochastic Self-Consistent Calculation for the DFTB}
    	\label{alg: stodftb}
\end{algorithm}

\begin{remark}
In the computation of the stochastic function $k(q_n,v_n)$, we have assumed that the Fermi level $\mu$ is given. In the stochastic algorithm framework, this can be done very efficiently using the trace estimator \cites{xi2018fast,lin2016approximating} for the density of states, which can be subsequently used to estimate the Fermi level.
\end{remark}

We denote the stochastic noise from the relation \eqref{Kk} by
\begin{equation}\label{xi}
 \xi_{\ell}(q,v)=k_{\ell}(q,v)-K_{\ell}(q).   
\end{equation}By applying Theorem \ref{krylovapprox}, we obtain the following result,
\begin{col}\label{noise}
The stochastic noise \eqref{xi} has zero mean and a bounded variance in the ball $\mathcal{B}(q^*,\rho)$.
\end{col}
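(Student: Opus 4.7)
The plan is to verify the two claims separately. The zero-mean property will follow immediately from the construction: since $K_\ell(q)$ is defined in \eqref{Kk} as $\mathbb{E}[k_\ell(q,v)]$, applying the conditional expectation to \eqref{xi} gives $\mathbb{E}[\xi_\ell(q,v)\mid q]=0$. No work is needed beyond citing the definition.

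For the bounded variance, I will use the standard inequality $\mathbb{E}[\|\xi_\ell(q,v)\|_2^2\mid q]\le \mathbb{E}[\|k_\ell(q,v)\|_2^2\mid q]$ and show that the right-hand side admits a uniform bound on $\mathcal{B}(q^*,\rho)$. The key step is to rewrite the $j$-th component of $k_\ell(q,v)$ in \eqref{sampling} using $\mathrm{tr}(E_j^T L V_\ell f(T_\ell) e_1 v^T E_j)=v^T E_j E_j^T L V_\ell f(T_\ell) e_1$, and then apply Cauchy--Schwarz to obtain the pointwise estimate
\begin{equation*}
|k_\ell(q,v)_j|\;\le\; \|L^{-1}\|_2\,\|L\|_2\,\|V_\ell\|_2\,\|f(T_\ell)\|_2\,\|v\|_2^2.
\end{equation*}
The columns of $V_\ell$ are orthonormal, so $\|V_\ell\|_2\le 1$. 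Since the Fermi--Dirac function in \eqref{eq: fermi-dirac} is globally bounded by $2$ on $\mathbb{R}$ and $T_\ell$ is real symmetric, the spectral calculus gives $\|f(T_\ell)\|_2\le 2$ independently of the spectrum of $T_\ell$. Summing over $j$ and taking expectation yields $\mathbb{E}[\|k_\ell(q,v)\|_2^2\mid q]\le C\,\mathbb{E}[\|v\|_2^4]$ for an explicit constant $C$ depending only on $N$, $L$, and the bound on $f$, and the fourth moment of $v$ is finite by the standing assumption that $v$ comes from a standard distribution with identity covariance (e.g., Rademacher or Gaussian entries).

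The main obstacle, and the reason the argument is not entirely automatic, is guaranteeing that the bound is uniform in $q\in\mathcal{B}(q^*,\rho)$. The Lanczos quantities $V_\ell$ and $T_\ell$ depend on $q$ through $A=A(q)$, and $T_\ell$ inherits its spectrum from $A(q)$, which in turn depends on $q$ via \eqref{A}. The crucial observation circumventing this difficulty is that the bound $\|f(T_\ell)\|_2\le 2$ does \emph{not} require any control on the spectrum of $T_\ell$; it follows solely from the global boundedness of the Fermi--Dirac profile. Consequently, once $L$ and $\|v\|$ are controlled by quantities independent of $q$, the constant $\Xi$ can be chosen as $\Xi=4N\|L^{-1}\|_2^2\|L\|_2^2\,\mathbb{E}[\|v\|_2^4]$, completing the verification of Assumption~\ref{as:05}.
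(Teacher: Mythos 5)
Your proof is correct, and it takes a more direct route than the paper. The paper's own argument is a one-line appeal to the continuity of $K$ together with Theorem~\ref{krylovapprox}: the Krylov error bound \eqref{eq: err-subs-approx} shows that the sampled quantity differs from the exact (bounded) quantity $Lf(A)L^{-1}vv^T$ by a controlled amount, so the approximation \eqref{approximation} stays bounded. You instead bound $k_\ell(q,v)$ pointwise from scratch, using only the cyclic-trace identity, Cauchy--Schwarz, the orthonormality of the Lanczos basis $V_\ell$, and the global bound $0<f\le 2$ on the Fermi--Dirac profile applied to the symmetric matrix $T_\ell$. Your route buys two things: it produces an explicit, $q$-independent constant $\Xi$ (indeed valid for all $q$, not just on $\mathcal{B}(q^*,\rho)$), and it explicitly controls the second moment $\mathbb{E}[\|k_\ell(q,v)\|_2^2\mid q]$ of the random quantity rather than just the boundedness of its mean, which the paper's terse proof glosses over. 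The one hypothesis you correctly flag is the finiteness of $\mathbb{E}[\|v\|_2^4]$; this is not a consequence of $\mathbb{E}[vv^T]=I$ alone, but it holds trivially for the Rademacher (Hutchinson) vectors used in the paper, where $\|v\|_2^2=M$ deterministically, and for Gaussian vectors. With that caveat noted, the argument is complete.
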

\begin{proof}
This is because $K(q)$ is continuous and Theorem \ref{krylovapprox} guarantees the boundedness of the approximation \eqref{approximation}. In addition, by the relation \eqref{Kk}, the stochastic noise has zero mean.
\end{proof}

\begin{remark}
   In sharp contrast to the deterministic counterpart \eqref{eq: simplem}, the term $k(q_n,v_n)$ in \eqref{alg: stodftb} emphasizes the point that the quantity is only sampled once, motivated by the remarkable success of the stochastic algorithms \cite{robbins1951stochastic}. This leads to a significant reduction of one iteration cost and a potential application of the stochastic framework for large-scale systems. 
\end{remark}

\subsection{A Preliminary Comparison of Stochastic and Direct SCC-DFTB}
In the implementation of stochastic approximation methods, three sources of error arise: {\it approximation, estimation} and {\it optimization} \cite{bottou2007tradeoffs}. In our case, due to the diagonal estimator \eqref{Diagonal} in which the true distribution of the random vector $v$ is determined by users, we do not consider an estimation error. Rather, we focus on the approximation error and the optimization error. 

To be precise, we aim at estimating how close some iterate $q$ obtained from a stochastic algorithm is to a solution $q^*$,
\begin{equation}\label{twoerrs}
    \|q-q^*\|_2\leq \underset{\textrm{optimization error}}{\|q-q_{\ell}^*\|_2}+\underset{\textrm{approximation error}}{\|q_{\ell}^*-q^*\|_2},
\end{equation}where $q_{\ell}^*$ and $q^*$ are fixed points of the mapping $K_{\ell}$ in \eqref{Kk} and the exact mapping $K$ in \eqref{mapK}, respectively. 

To quantify the approximation error, we use Theorem \ref{krylovapprox}. We assume that $K$ \eqref{mapK} satisfies the stability condition in Theorem \ref{thmcontract'} and the same for $K_{\ell}$ \eqref{Kk} for sufficiently large $\ell$ based on Theorem \ref{krylovapprox}. As we discussed in Theorem \ref{thmcontract'}, these mappings can be transformed to contractions with a small auxiliary parameter $\theta>0$. Therefore, without loss of generality, we assume that $K$ and $K_{\ell}$ satisfy the contractiveness {\bf Assumption} \ref{as:04} throughout the following analysis.

First, we derive an bound for the approximation error. By definitions of the solutions $q^*$ and $q_{\ell}^*$, we have
\begin{equation*}
    \begin{split}
        q_{\ell}^*-q^*=K_{\ell}(q_{\ell}^*)-K_{\ell}(q^*)+K_{\ell}(q^*)-K(q^*).
    \end{split}
\end{equation*}By using the triangle inequality, one has,
\begin{equation*}
    \|q_{\ell}^*-q^*\|_2\leq \|K_{\ell}(q_{\ell}^*)-K_{\ell}(q^*)\|_2+\|K_{\ell}(q^*)-K(q^*)\|_2\leq c_{\ell}\|q_{\ell}^*-q^*\|_2+C(\rho)\rho^{-\ell},
\end{equation*}where $c_{\ell}\in(0,1)$ is associated with the contraction $K_{\ell}$. To derive the last term $C(\rho)\rho^{-\ell}$, we recall the relation \eqref{approximation} with $K(q)$ in \eqref{mapK} and $K_{\ell}(q)$ in \eqref{Kk}. In the case of the Hutchinson estimator \eqref{Diagonal} where the sample space consists of a finite number of random vectors of the same length,  we apply Theorem \ref{krylovapprox} to  \eqref{approximation},
\begin{equation*}
\begin{split}
    &\|\mathbb{E}[ Lf(A)L^{-1}vv^T]- \mathbb{E}[\|L^{-1}v\|_2 LV_{\ell}f(T_{\ell})e_1v^T]\|_2\\
    &\leq \|\mathbb{E}[ L\left(f(A)L^{-1}v-\|L^{-1}v\|_2V_{\ell}f(T_{\ell})e_1\right)v^T]\|_2\\
    &\leq \mathbb{E}[\| L\|_2\cdot\|f(A)L^{-1}v-\|L^{-1}v\|_2V_{\ell}f(T_{\ell})e_1\|_2\cdot\|v^T\|_2]\\
    &\leq \|L\|_2\|v\|_2\frac{4M(\rho)\|L^{-1}v\|_2\rho^{-\ell}}{\rho-1}.
\end{split}
\end{equation*}This can be used to estimate the difference $\|K_{\ell}(q^*)-K(q^*)\|_2$ by noticing the definitions $K(q)$ in \eqref{mapK} and $K_{\ell}(q)$ in \eqref{Kk}. Overall, the approximation error is estimated as
\begin{equation}\label{approxerr}
    \|q_{\ell}^*-q^*\|_2\leq \frac{C(\rho)\rho^{-\ell}}{1-c_{\ell}}.
\end{equation}
 
Now we turn to the optimization error based on the result in Section \ref{cmp}. In particular,  this can be regarded as a route to compare the stochastic and direct methods. In computing the Mulliken charge $q(j)$ in the system \ref{sccdftb}, the computation involved in the eigenvalue problem scales $\mathcal{O}(M^3)$. In contrast, this only scales  $\mathcal{O}(\ell M^2)$ within the stochastic Lanczos method \ref{alg: StoLan}. The cost for the rest of the procedure in both methods is negligible due to the sparsity of the matrices. This observation leads to the comparison in Table \eqref{tab}.

\begin{table}
\begin{tabular}{||c c c c ||}
 \hline
 Algorithm & Cost of one iteration & Iterations to reach $\epsilon$ & Total cost \\ [0.5ex] 
 \hline
 Stochastic &  $\mathcal{O}(\ell M^2)$ & $\mathcal{O}\left(\epsilon^{-\frac{2}{1-\beta}}\right)$ &  $\mathcal{O}\left(\ell M^2\epsilon^{-\frac{2}{1-\beta}}\right)$ \\
 \hline
 Exact & $\mathcal{O}(M^3)$ & $\Theta\left(\log\frac{1}{\epsilon}\right)$ & $\mathcal{O}\left(M^3\log\frac{1}{\epsilon}\right)$ \\
 \hline
\end{tabular}
\caption{Comparison between stochastic and direct algorithms \eqref{fpiscc} in terms of optimization error with tolerance $\epsilon$. Here $M$ is the dimension of the Hamiltonian or overlap matrix. \label{tab}}
\end{table}

We are now in a better position to compare the direct  method \eqref{fpiscc} and the stochastic method \eqref{alg: stodftb}. To reach accuracy $\epsilon$ in \eqref{twoerrs}, Table \eqref{tab} implies that the direct method requires the following time
\begin{equation*}
    \mathcal{O}\left(M^3\log\frac{1}{\epsilon}\right),
\end{equation*}whereas
the stochastic method requires the time, for $\beta\in(\frac{1}{2},1)$ in \ref{col: itercomplexity},
\begin{equation*}
    \mathcal{O}\left(\ell M^2\widetilde{\epsilon}^{-\frac{2}{1-\beta}}\right),
\end{equation*}where
\begin{equation*}
    \widetilde{\epsilon}:=\epsilon-\frac{C(\rho)\rho^{-\ell}}{1-c_{\ell}}
\end{equation*}by using the estimate \eqref{approxerr}.

Therefore, the stochastic method could become advantageous when the number of orbitals, $M$, is larger than
\begin{equation}
    M=\mathcal{O}\left(\ell\left(\widetilde{\epsilon}^{\frac{2}{1-\beta}}\log\frac{1}{\epsilon}\right)^{-1}\right),
\end{equation}This was motivated by the previous work \cite{bottou2007tradeoffs} to compare stochastic optimization methods with direct counterparts. 

\begin{remark}
   A closer inspection of the Krylov subspace approximation \eqref{krylovapprox} implies that the temperature influences the quality of this approximation. Especially, at a low temperature $T\ll 1$, as we discuss after Theorem \ref{thmb4}, the minor axis of a Bernstein ellipse should be small enough to guarantee a reasonable approximation. More specifically, by the definition of the Bernstein ellipse \cite{trefethen2019approximation}, an optimal radius $\rho$ can be found from the following equation
   \begin{equation}\label{radius}
       \rho-\frac{1}{\rho}= \frac{4\pi k_BT}{\lambda_{\max}(A)-\lambda_{\min}(A)}=:c(T),
   \end{equation}which yields that when $T\ll 1$ 
   \begin{equation*}
       \rho = \frac{c(T)+\sqrt{c(T)^2+4}}{2}\approx 1+\frac{4c(T)+c(T)^2}{8}\approx 1+\frac{c(T)}{2}.
   \end{equation*}Therefore, at the low temperature, one can deduce that the approximation error of \eqref{twoerrs} decays exponentially with $\ell$ roughly as
   \begin{equation*}
       \left(1+\frac{2\pi k_BT}{\lambda_{\max}(A)-\lambda_{\min}(A)}\right)^{-\ell}.
   \end{equation*}Due to the factor $T$ in this expression, a sufficiently large $\ell$ should be chosen to obtain a reasonable approximation. In contrast, in the regime of high temperature, the Krylov approximation method becomes very effective, since a large value of $\rho$ can be selected from \eqref{radius}.
\end{remark}

In summary, the stochastic algorithm \eqref{alg: stodftb} offers a new framework for electronic structure calculations. One immediate question is when it is more efficient than a direct SCF method for specific application, e.g., biomolecules or crystalline solids. It is still a complex issue and a much more comprehensive study is needed to take into account many factors, e.g., parallelization, implementations with sparse matrix factorizations, choosing optimal mixing parameters, variance reduction techniques, etc. We leave these issues to future studies.

 \section{Numerical Results}\label{section5}

In this section, we present preliminary results from some numerical experiments. We consider a system of graphene with 800 atoms.  We have chosen the lattice spacing to be 1.4203 \AA. In the function $f$ \eqref{eq: fermi-dirac}, we set  the Fermi level to be -0.1648 and $\beta=1052.58,$ which corresponds to $300$ Kelvin. The Hamiltonian and overlap matrices, together with the matrix $\Gamma$ are all obtained from DFTB+ \cite{elstner1998self}. The dimension of these matrices is $3200 \times 3200.$

As a reference, the solution $q^*$ of \eqref{fpiscc} is first computed using the simple mixing scheme with damping $a=0.001$. In addition, upon convergence, we used a centered difference method with step size $h=0.001$,  and computed the Jacobian $K'(q^*)$. We found that all the eigenvalues are real and lie between $-6.8859$ and $-0.1348$. In light of Theorem \ref{thmcontract'}, the contraction assumption \eqref{eq: contractive} is fulfilled under some norm by choosing a small step size $a$. 

Since the original fixed point problem $q=K(q)$ has been replaced by $q=K_\ell(q)$, we first examine the error between the fixed points. Figure  \ref{fig:errvsd} shows how this error depends on the dimension $\ell$ of the subspace. The error here is measured in $\|\cdot \|_\infty$ norm and the norm of $q^*$ is around 4. One can observe that the error decreases when the subspace is expanded.  
 
\begin{figure}
    \centering
    \includegraphics[scale=0.2]{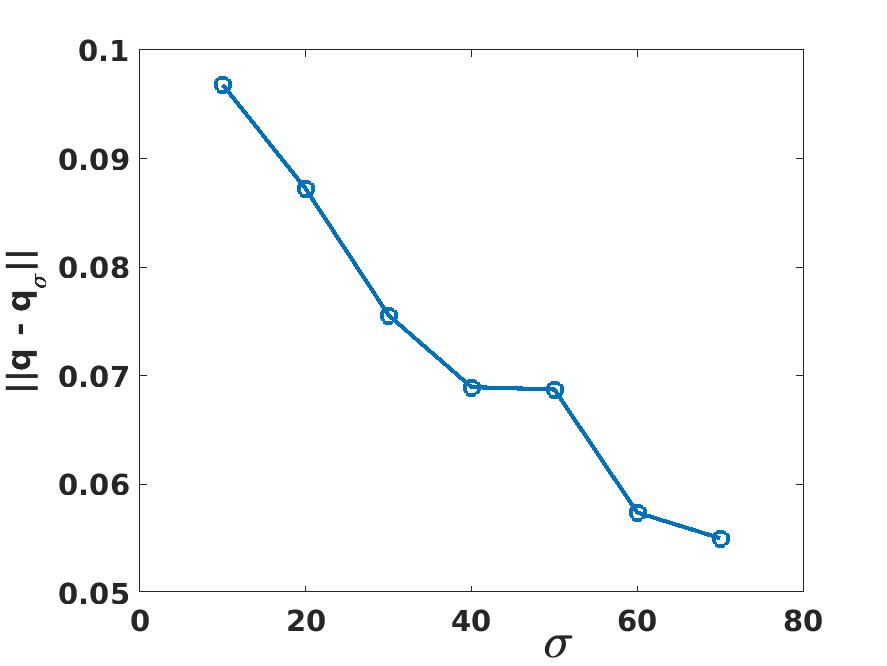}
    \caption{The error of the subspace approximation $K_\ell$: $q^*$ and $q_\ell^*$ are respectively the fixed-points of $K$ and $K_\ell$. The error is measured in the infinity norm. }
    \label{fig:errvsd}
\end{figure}

For the rest of the discussions, we choose $\ell=20$, and we regard the fixed point of $K_{20}(q)$ as the true solution $q^*$.

We test linear mixing methods (Algorithm \ref{alg: stodftb}). We pick uniform mixing parameters, i.e., { $b_i =1/m$}. In addition, we choose the damping parameter, $a_n= \min \{ \left(50 + 2n \right)^{-1}, 0.005\},$ which fulfills the conditions in the convergence theorem.  The error from 30,000 iterations are shown in Figure \ref{fig:linear}.   To mimic the mean error, we averaged the error over every 1,000 iterations. In addition, we run all the cases with simple mixing for 2,000 iterations, followed with the mixing schemes turned on, to allow these cases to follow the same initial period. Surprisingly, the linear mixing scheme does not seem to have faster convergence than the simple mixing.  To further test the convergence, we choose the damping parameter as follows,  $a_n=\min \{ [50 + 4 n^{3/4} ]^{-1}, 0.005\},$
and show the results in Figure \ref{fig:linear'}. Interestingly, with this choice of the damping parameter, using more mixing steps (larger $m$) yields faster convergence.

\begin{figure}
    \centering
    \includegraphics[scale=0.3]{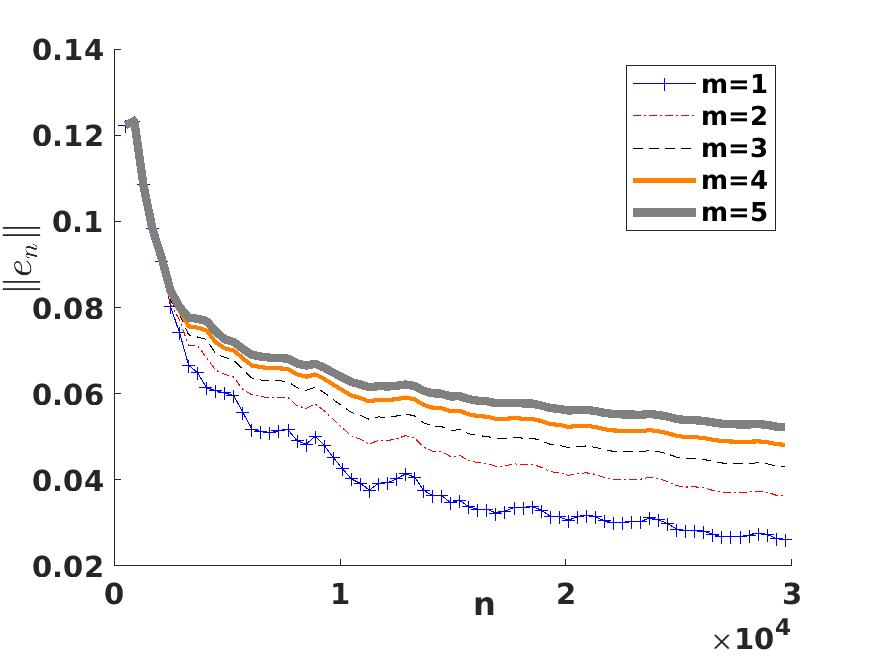}
    \caption{The error $\|\bar{q}_n-q^*\|_{\infty}$ from the linear mixing method (Algorithm \ref{alg: stodftb}) with $m=2,3,4, 5$ and $6$ using $a_n= \min \{ \left(50 + 2n \right)^{-1}, 0.005\}$. }
    \label{fig:linear}
\end{figure}
\begin{figure}
    \centering
    \includegraphics[scale=0.25]{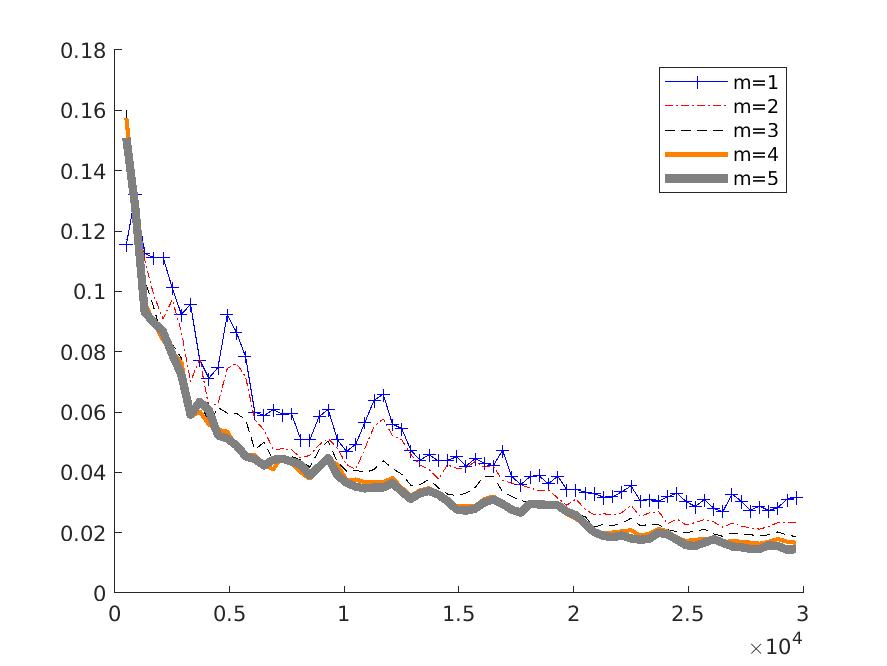}
    \caption{The error $\|\bar{q}_n-q^*\|_{\infty}$ from the linear mixing method (Algorithm \ref{alg: stodftb}) with $m=2,3,4, 5$ and $6$ using $a_n=\min \{ [50 + 4 n^{3/4} ]^{-1}, 0.005\}$. }
    \label{fig:linear'}
\end{figure}

 As a simple exposition, we applied the Anderson mixing method \cite{toth_convergence_2015} to the stochastic algorithm \eqref{alg: stodftb}. Note that the mixing coefficient $b_n$ is determined on the fly via a least squares procedure \cite{toth_convergence_2015}. Figure \ref{fig:anderson} displays the error from 30,000 iterations of the Anderson's method with $m=2,3,4$ and $5$. In the implementations, we choose $a_n= [50 + 4 n^{3/4} ]^{-1}.$  Again, due to the stochastic nature, we define the error to be 
\(\bar{q}_n - q^* \) with $\bar{q}_n$ being an local average over the previous 1,000 iterations. The error is then measured by the $\infty$-norm. 
One finds that the Anderson mixing does improve the convergence. But the improvement does not seem to be overwhelming.  
\begin{figure}
    \centering
    \includegraphics[scale=0.2]{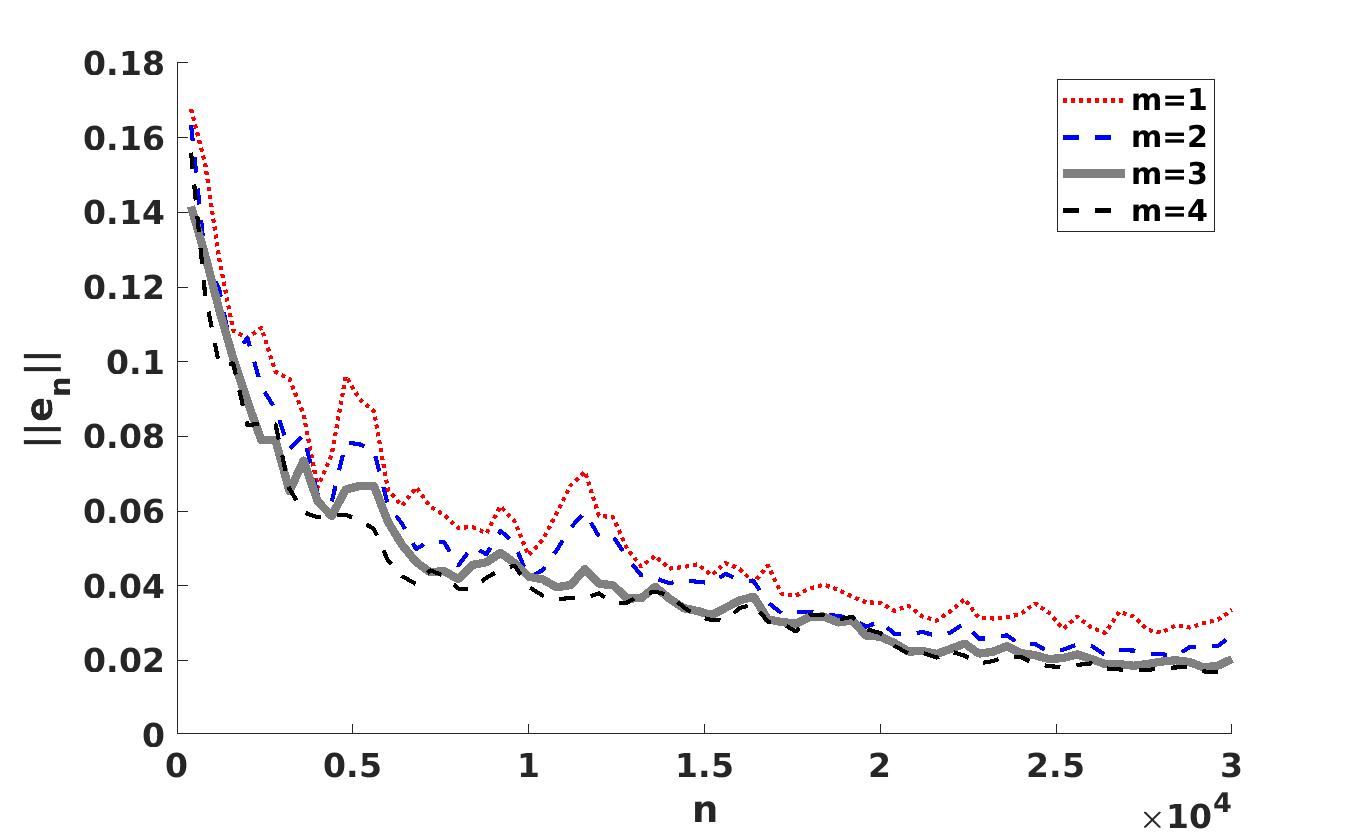}
    \caption{The error $\|\bar{q}_n-q^*\|_{\infty}$ from the Anderson mixing method with $m=2,3,4$ and $5$ using $a_n= [50 + 4 n^{3/4} ]^{-1}$.}
    \label{fig:anderson}
\end{figure}

Figure \ref{fig:bi} plots the mixing coefficients $b_i$'s from the stochastic Anderson method with $m=3.$ It can be observed that these coefficients are stochastic in nature. Remarkably, after a short burn-in period, these coefficients tend to fluctuate around the uniform mean $1/m.$ One interpretation is that as the iterates $q_n$ get closer to the $q^*$, the residual error $G(q)$ in \eqref{eq: residual} is dominated by the stochastic error $\xi_n$. In this case the least-square problem is mostly determined by noise, and it does not show bias toward a particular step.
\begin{figure}
    \centering
    \includegraphics[scale=0.3]{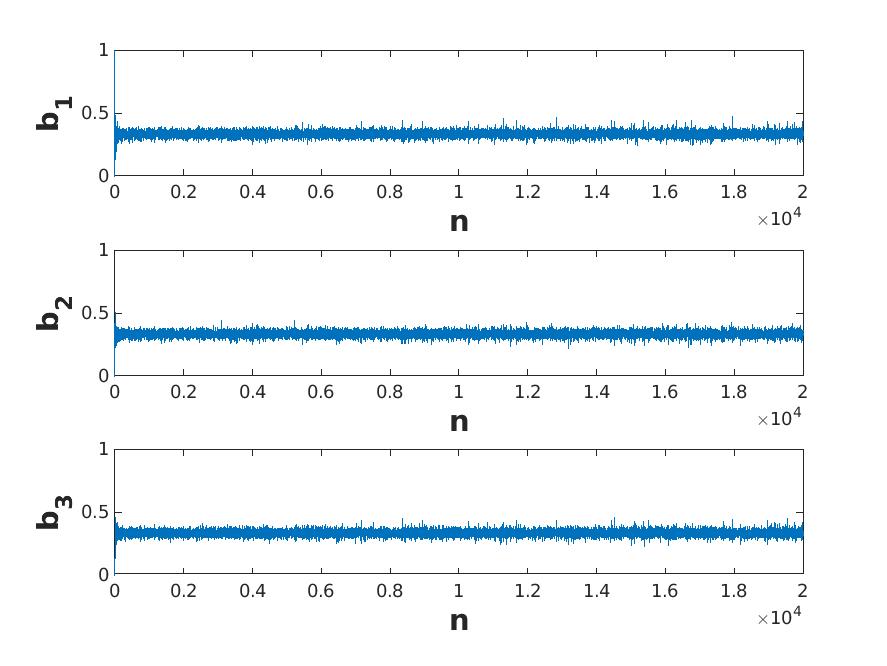}
    \caption{The coefficient $b_i$ from the Anderson (2) method }
    \label{fig:bi}
\end{figure}

 Lastly, all the computations were performed in Matlab R2020b, with parameters extracted from DFTB \cite{elstner1998self}. To follow up the discussion in Section \ref{cmp}, with 800 atoms, each stochastic iteration takes CPU time 4.75 (seconds), while a direct method takes 52.88 s. When the system size is increased to 1600 atoms, the respective CPU time is 10.12 s and 493.42 s.
  
\section{Conclusion}
This paper is motivated by the observation that
the main roadblock for extending electronic structure calculations to large systems is the SCF and the full diagonalizations that are involved in  each step of the procedure. This observation, for instance, has motivated a great deal of effort to develop linear or sublinear-scaling algorithms that do not directly rely on direct eigevalue computations \cites{goedecker1999linear,bowler2002recent,garcia2007sub}. Meanwhile,  stochastic algorithms have shown promising capability to handle linear and nonlinear problems in numerical linear algebra \cite{martinsson2020randomized}, and computational chemistry \cites{reynolds1982fixed,morales2021frontiers,golse2019random,li2020random,hermann2020deep}. This paper takes an initial step toward a stochastic implementation of the SCF. The main purpose is to establish certain convergence results. In particular, we showed that when the mixing parameters are selected a priori, the mixing method converges with probability one when the iteration is stable. Additionally, we derived the concentration inequality for the stochastic method. Some of these results are similar to those from the stochastic gradient descent methods in machine learning \cites{bottou2018optimization,johnson2013accelerating,defazio2014saga,nguyen2017sarah}. A crucial issue in the current approach is the stability: Since the contractive property only holds in the vicinity of the solution,  one must establish the stability of the iterations before proving the convergence.

While the convergence is a critical issue, many practical aspects remain as open issues, and they were not  studied in this paper. First, how to choose the mixing parameter $b_i$'s  in advance still remains open. Although we have shown the dependence of the error bound on $m$ and the mixing parameters, our analysis does not provide a clear criterion. Secondly, the choice of the damping parameter has a direct impact on the convergence. It would be of practical importance to be able to adjust them on-the-fly, as studied in the machine learning literature \cite{bottou2018optimization}. Finally,  as discussed in Section \ref{cmp}, comprehensive studies are needed to compare the stochastic algorithms to direct implementations of SCF to evaluate the performance for different physical systems. On the other hand, the two approaches do not have to be mutually exclusive in practice. For instance,  one can run iterations using stochastic algorithms first, and later switch to a direct method to improve the accuracy at the final stage. Such a strategy is used in machine learning, i.e., the stochastic variance reduction gradient \cite{johnson2013accelerating}. For example, Reddi et al \cite{reddi2016stochastic} showed that alternating between stochastic and direct methods can be better than using only direct methods.

\section{Acknowledgment}
The authors thank Prof. Kieron Burke and Prof. Lin Lin for discussions and references related to this work.

\appendix

\section{The diagonal estimator}

We restate the stochastic framework \cite{bekas2007estimator} and the property of the Hutchinson estimator \cite{avron_randomized_2011} in the following lemma.
\begin{lemma}\label{Diagonal}
For each matrix $A \in \mathbb{R}^{M\times M}$, the follow identity holds,
\begin{equation}
 \mathrm{diag}(A) = \mathrm{diag}(\mathbb{E}[Avv^T]),
\end{equation}
where $v\in \mathbb{R}^M$ is a random vector satisfying,
\begin{equation}
    \mathbb{E}[v v^T] =I_{M\times M}. 
\end{equation}Moreover, if the Hutchinson estimator is used, then
\begin{equation*}
    \mathrm{Var}(\mathrm{diag}[Avv^T])=\|A\|_F^2-\sum A_{ii}^2,
\end{equation*}where the entries of $v$ are i.i.d Rademacher random variables, 
\begin{equation*}
    \mathbb{P}\{v^{(i)}=\pm 1\}=\frac{1}{2}.
\end{equation*}
\end{lemma}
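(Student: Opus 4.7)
The plan is to treat the two parts of the lemma separately, since the first is a one-line calculation from linearity of expectation, while the second requires a careful index-by-index computation that exploits the special structure of Rademacher entries.

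For the first identity, I would simply pull the deterministic matrix $A$ out of the expectation and invoke the hypothesis $\mathbb{E}[vv^T] = I_{M\times M}$ to obtain
\[
\mathbb{E}[A v v^T] \;=\; A\,\mathbb{E}[v v^T] \;=\; A.
\]
Taking $\mathrm{diag}(\cdot)$ of both sides yields the claim. In particular, this shows that $A v v^T$ is an unbiased estimator of $A$, so $\mathrm{diag}(A v v^T)$ is an unbiased estimator of $\mathrm{diag}(A)$, which is what justifies its use as a Monte Carlo building block in the rest of the paper.

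For the variance statement I would first record two elementary properties of Rademacher entries: $(v^{(i)})^2 = 1$ almost surely, and $\mathbb{E}[v^{(k)} v^{(l)}] = \delta_{kl}$ by independence and zero mean. Next, I would write the $i$-th diagonal entry explicitly as
\[
(A v v^T)_{ii} \;=\; v^{(i)} \sum_{k=1}^{M} A_{ik}\, v^{(k)},
\]
so that $\mathbb{E}[(A v v^T)_{ii}] = A_{ii}$ and
\[
\mathbb{E}\bigl[(A v v^T)_{ii}^2\bigr] \;=\; \sum_{k,l=1}^{M} A_{ik} A_{il}\, \mathbb{E}\bigl[(v^{(i)})^2 v^{(k)} v^{(l)}\bigr] \;=\; \sum_{k=1}^{M} A_{ik}^2,
\]
where the second equality uses $(v^{(i)})^2 = 1$ and $\mathbb{E}[v^{(k)} v^{(l)}] = \delta_{kl}$. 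Subtracting $A_{ii}^2$ gives the variance of the $i$-th diagonal entry, and summing over $i$ collapses $\sum_{i,k} A_{ik}^2$ into $\|A\|_F^2$, producing the stated formula.

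The computation is entirely routine, so I do not anticipate a real obstacle. The only point that needs care is the interpretation of $\mathrm{Var}(\mathrm{diag}[A v v^T])$, which for the identity to hold must be read as the sum of the coordinatewise variances of the random vector $\mathrm{diag}(A v v^T)$. Once that convention is fixed, the fact that $(v^{(i)})^2 \equiv 1$ deterministically (so that all cross-terms with $k \neq l$ vanish from the second moment) is exactly what drives the result to $\|A\|_F^2 - \sum_i A_{ii}^2$.
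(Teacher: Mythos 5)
Your proof is correct. The paper itself does not prove this lemma---it simply restates it from the cited references (Bekas--Kokiopoulou--Saad and Avron--Toledo)---and your argument supplies exactly the standard computation those references contain: linearity of expectation for unbiasedness, and the entrywise second-moment calculation using $(v^{(i)})^2=1$ and $\mathbb{E}[v^{(k)}v^{(l)}]=\delta_{kl}$, with $\mathrm{Var}$ read as the sum of coordinatewise variances.
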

The {\it diagonal} can be estimated using a Monte-Carlo sum with $n_{vec}$ i.i.d. random vectors. A variety of such estimators are investigated in \cite{avron_randomized_2011}.

\section{An error estimate on the Krylov subspace approximation}\label{sec: A1}

Let $\ell'>\ell>1$ and $p_{\ell'}(x)$ be the Chebyshev polynomial approximation of degree $\ell'$ to $f(x)$. We recall that $\ell$ is the number of iterations for using the Lanczos algorithm.
By the triangle inequality, we split the error into three terms,
\begin{equation*}
    \begin{aligned}
        &\| f(A)v -  \|v\|_2 V_{\ell}f(T_{\ell})e_1 \|_2\leq \| f(A)v -  p_{\ell'}(A)v \|_2+\| p_{\ell'}(A)v -  \|v\|_2 V_{\ell}p_{\ell'}(T_{\ell})e_1 \|_2\\
        &+\| \|v\|_2 V_{\ell}p_{\ell'}(T_{\ell})e_1 -  \|v\|_2 V_{\ell}f(T_{\ell})e_1 \|_2.
    \end{aligned}
\end{equation*}
We will derive upper bounds for these three terms. For the first and third terms, we use Theorem $7.2$ in \cite{trefethen2019approximation}. Meanwhile, we will Theorem $8.1$ in \cite{trefethen2019approximation} to find an upper bound for the second term.  
\begin{theorem}[Theorem 7.2 in \cite{trefethen2019approximation}]
For an integer $s\geq 1$, let $f$ and its derivatives through $f^{(s-1)}$ be absolutely continuous on $[-1,1]$ and suppose that the $s$th order derivative $f^{(s)}$ is of bounded variation $V$. Then, for any $\ell>s$, the Chebyshev approximation of degree $\ell$, $p_{\ell}$, satisfies, 
\begin{equation*}\label{chebyapprox}
    \|f-p_{\ell}\|\leq \frac{2V}{\pi s(\ell-s)^s},
\end{equation*}where $\|h\|$ denotes the supremum norm of the function $h$.
\end{theorem}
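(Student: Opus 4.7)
The plan is to bound the individual Chebyshev coefficients of $f$ via repeated integration by parts, exploiting the hypothesis that $f^{(s)}$ has bounded variation $V$, and then to bound the truncation error by the tail of the Chebyshev series. I would first expand $f(x)=\sum_{k=0}^{\infty}a_k T_k(x)$. Using the defining property $T_k(\cos\theta)=\cos(k\theta)$ together with the substitution $x=\cos\theta$, the coefficients take the form
\[
a_k = \frac{2}{\pi}\int_0^{\pi} f(\cos\theta)\cos(k\theta)\,d\theta \quad (k\geq 1).
\]
Because $|T_k|\leq 1$ on $[-1,1]$, the Chebyshev projection $p_\ell$ of degree $\ell$ satisfies
\[
\|f-p_\ell\|_\infty \;\leq\; \sum_{k=\ell+1}^{\infty}|a_k|,
\]
so the task reduces to a sharp decay bound on $|a_k|$ followed by a tail sum.

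The coefficient bound comes from integrating by parts $s$ times in $\theta$. Each pass transfers a derivative from the trigonometric kernel onto $g(\theta):=f(\cos\theta)$; the chain rule brings down bounded factors of $-\sin\theta$ while each pass also produces a factor $1/k$ from antidifferentiating the sine or cosine kernel. The assumed absolute continuity of $f,f',\ldots,f^{(s-1)}$ justifies every step, and the boundary terms at $\theta=0$ and $\theta=\pi$ vanish because at each stage the trigonometric factor is either a sine (which is zero at both endpoints) or a cosine whose endpoint values cancel against the accumulated $-\sin\theta$ prefactors. After $s$ passes, one more Riemann--Stieltjes integration by parts against the function of bounded variation $f^{(s)}$ replaces the remaining integral by an expression whose modulus is controlled by $V$. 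Collecting the $s$ accumulated factors of $1/k$ yields, for $k>s$,
\[
|a_k| \;\leq\; \frac{2V}{\pi\,k(k-1)(k-2)\cdots(k-s)}.
\]

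Finally I would sum the tail. Using $k(k-1)\cdots(k-s)\geq (k-s)^{s+1}$ and the integral test,
\[
\sum_{k=\ell+1}^{\infty}|a_k| \;\leq\; \frac{2V}{\pi}\sum_{k=\ell+1}^{\infty}\frac{1}{(k-s)^{s+1}} \;\leq\; \frac{2V}{\pi}\int_{\ell-s}^{\infty}\frac{dx}{x^{s+1}} \;=\; \frac{2V}{\pi\,s\,(\ell-s)^s},
\]
which is the claimed bound. A sharper route avoids the estimate $k(k-1)\cdots(k-s)\geq(k-s)^{s+1}$ by using the partial-fraction identity that makes $\sum_k 1/[k(k-1)\cdots(k-s)]$ telescope, recovering the same constant directly.

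The main obstacle will be the bookkeeping in the $s$-fold integration by parts: one must track the chain-rule sines generated by $x=\cos\theta$, verify that every boundary contribution genuinely vanishes or cancels between $\theta=0$ and $\theta=\pi$, and arrange the final Stieltjes step so that the total variation $V$ enters exactly once, tightly, and with the correct numerical constant $2/\pi$. Once that analytic core is secured, the tail sum is a routine integral-test computation and the theorem follows.
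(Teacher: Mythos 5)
The paper does not actually prove this statement: it is imported verbatim as Theorem 7.2 of Trefethen's \emph{Approximation Theory and Approximation Practice}, so there is no internal proof to compare against. Your sketch reproduces the standard argument from that reference --- bound the Chebyshev coefficients by $|a_k|\le 2V/\bigl(\pi k(k-1)\cdots(k-s)\bigr)$ for $k>s$, then sum the tail --- and your tail estimate (whether via $k(k-1)\cdots(k-s)\ge(k-s)^{s+1}$ plus the integral test, or via the telescoping partial fractions) does deliver the stated constant $\frac{2V}{\pi s(\ell-s)^s}$. The one place where the mechanism you describe would not go through as written is the coefficient bound itself: if you integrate by parts $s$ times purely in the $\theta$ variable and treat the chain-rule factors $-\sin\theta$ as ``bounded factors,'' you obtain a denominator $k^s$ rather than the shifted product $k(k-1)\cdots(k-s)$, and, more seriously, the final Riemann--Stieltjes step then requires the total variation of $\tfrac{d^s}{d\theta^s}f(\cos\theta)$ --- which mixes $f',\dots,f^{(s)}$ with trigonometric weights --- rather than the hypothesized variation $V$ of $f^{(s)}$. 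The correct bookkeeping absorbs each $\sin\theta$ into the oscillatory kernel via the product-to-sum identity $\sin\theta\,\sin k\theta=\tfrac12\bigl[\cos(k-1)\theta-\cos(k+1)\theta\bigr]$ (equivalently, the Chebyshev antiderivative identity), so that each pass expresses $a_k(f)$ through coefficients of $f'$ at the shifted indices $k\pm1$ divided by $2k$; iterating this recursion $s$ times produces the product of shifted indices in the denominator and leaves the variation measured on $f^{(s)}$ itself. With that correction your outline is exactly the proof in the cited reference.
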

For a general symmetric matrix $A$ whose spectrum is not necessarily contained in $[-1,1]$, a linear transformation is first applied to $A$ to shift the spectrum to the interval $[-1,1]$. This can be achieved using the linear transformation
\begin{equation*}
    A\mapsto \Tilde{A}:=\frac{2A}{b-a}-\frac{b+a}{b-a} I, \quad b:= \lambda_{\max}, \; a:=\lambda_{\min}.
\end{equation*} With this transformation, we have,
\begin{equation}\label{lineartrans}
    f(x)\mapsto \tilde{f}(x):=f(\frac{b-a}{2}x+\frac{a+b}{2})\approx \tilde{p}(x)\mapsto p(x):=\tilde{p}(\frac{2x}{b-a}-\frac{a+b}{b-a}),
\end{equation}which means that $p(x)$ is the Chebyshev approximation of $f(x)$ defined on the desired interval. Following this the procedure, the variation of $\tilde{f}^{(s)}(x)$ is proportional to that of $f^{(s)}(x)$ as follows
\begin{equation*}
    \tilde{V}=\bigg(\frac{b-a}{2}\bigg)^sV,
\end{equation*}where $V$ is the variation of $f^{(s)}(x)$.

Since $A$ is symmetric as defined in \eqref{A},  a direct application of the above theorem yields,
\begin{equation*}
    \|f(A)-p_{\ell'}(A)\|=\max_{\lambda\in\sigma(A)}|f(\lambda)-p_{\ell'}(\lambda)|\leq\|f-p_{\ell'}\|=\|\tilde{f}-\tilde{p}_{\ell'}\|\leq \frac{V}{2^{s-1}\pi s}\bigg(\frac{b-a}{\ell'-s}\bigg)^s.
\end{equation*}Consequently, we have 
\begin{equation*}
    \|f(A)v-p_{\ell'}(A)v\|_2\leq \frac{\|v\|_2V}{2^{s-1}\pi s}\bigg(\frac{b-a}{\ell'-s}\bigg)^s.
\end{equation*}
Similarly, we bound the third term as follows
\begin{equation*}
    \| \|v\|_2 V_{\ell}p_{\ell'}(T_{\ell})e_1 -  \|v\|_2 V_{\ell}f(T_{\ell})e_1 \|_2\leq \frac{\|v\|_2V}{2^{s-1}\pi s}\bigg(\frac{b-a}{\ell'-s}\bigg)^s,
\end{equation*}since $V_{\ell}$ is the semi-orthogonal matrix whose $2$-norm is $1$.
To estimate the second term, we use Theorem $8.1$ in \cite{trefethen2019approximation}, which relies on the Bernstein ellipse.

\begin{theorem}\label{thmb4}[Theorem 8.1 \cite{trefethen2019approximation}]
Let $f(x)$ be analytic in $[-1,1]$ and assume that $f(x)$ can be extended analytically to the open Bernstein ellipse $E_{\rho}$ for some $\rho>1$, where it satisfies $|f(x)|\leq M(\rho)$ for some $M(\rho)$. Then, the coefficients of the Chebyshev approximation of the function satisfy $|c_0|\leq M(\rho)$ and 
\begin{equation*}
    |c_n|\leq 2M(\rho)\rho^{-n},\quad n\geq 1.
\end{equation*}
\end{theorem}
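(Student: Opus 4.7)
The plan is to pass to the Joukowski parametrization $z = \tfrac{1}{2}(w + w^{-1})$, convert the Chebyshev series on $[-1,1]$ into a Laurent series on an annulus, and then apply Cauchy's estimates to the Laurent coefficients. The bound $|c_n| \le 2 M(\rho) \rho^{-n}$ is essentially a statement about how fast the Laurent coefficients of $g(w) := f\bigl(\tfrac{1}{2}(w + w^{-1})\bigr)$ decay on the annulus $\rho^{-1} < |w| < \rho$, which is a standard consequence of Cauchy's integral formula for Laurent coefficients.

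More concretely, I would first verify that the Joukowski map $w \mapsto \tfrac{1}{2}(w + w^{-1})$ sends each circle $|w| = r$ (for $r > 1$) onto a Bernstein ellipse $E_r$, and in particular sends the open annulus $\rho^{-1} < |w| < \rho$ onto the open Bernstein ellipse $E_\rho$ (with $[-1,1]$ itself recovered in the limit $r \to 1$). Since $f$ is analytic on $E_\rho$, the composition $g(w) = f\bigl(\tfrac{1}{2}(w + w^{-1})\bigr)$ is analytic on that annulus and hence admits a Laurent expansion $g(w) = \sum_{n \in \mathbb{Z}} a_n w^n$. The $w \leftrightarrow w^{-1}$ symmetry of the Joukowski map forces $g(w) = g(w^{-1})$, so $a_n = a_{-n}$. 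Restricting to $|w| = 1$ with $w = e^{i\theta}$ gives $g(e^{i\theta}) = f(\cos\theta) = \sum_{n \ge 0} c_n \cos(n\theta)$; matching this Fourier expansion with the restriction of the Laurent series to the unit circle identifies $a_0 = c_0$ and $a_n = c_n / 2$ for $n \ge 1$.

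The key estimate then follows from Cauchy's formula for Laurent coefficients: for any $r$ with $1 < r < \rho$,
\begin{equation*}
|a_n| \;=\; \left| \frac{1}{2\pi i} \oint_{|w| = r} g(w)\, w^{-n-1} \, dw \right| \;\le\; \frac{\max_{|w|=r}|g(w)|}{r^n} \;\le\; \frac{M(\rho)}{r^n},
\end{equation*}
where the last inequality uses that the image of $|w| = r$ is $E_r \subset E_\rho$, so the pointwise bound $|f| \le M(\rho)$ on $E_\rho$ applies. Letting $r \to \rho^-$ yields $|a_n| \le M(\rho)\rho^{-n}$, and combining with $c_0 = a_0$ and $c_n = 2 a_n$ for $n \ge 1$ gives the two claimed bounds.

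The main technical point, rather than a true obstacle, is being careful about the open-versus-closed distinction: analyticity and the pointwise bound $M(\rho)$ are assumed only on the \emph{open} ellipse, so the Cauchy estimate must be performed on slightly smaller ellipses $E_r$ with $r < \rho$ and then passed to the limit $r \to \rho^{-}$. One also has to keep track of the factor of $2$---it arises because $\cos(n\theta)$ for $n \ge 1$ contributes both $w^n$ and $w^{-n}$ to the Laurent series, while $T_0 \equiv 1$ contributes only the constant term---which is exactly why the bounds for $c_0$ and for $c_n$ with $n \ge 1$ differ by this factor.
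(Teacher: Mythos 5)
Your proof is correct and is essentially the canonical argument: the paper does not prove this statement itself but quotes it verbatim as Theorem 8.1 of the cited reference \cite{trefethen2019approximation}, whose proof is exactly the Joukowski-transplantation you describe (Laurent expansion of $g(w)=f(\tfrac12(w+w^{-1}))$ on the annulus, Cauchy estimates on circles $|w|=r<\rho$, then $r\to\rho^-$, with the factor $2$ coming from $a_n=a_{-n}$ and $c_n=2a_n$ for $n\ge 1$). All the steps check out, including the careful handling of the open ellipse and the identification of the Laurent coefficients with the Chebyshev coefficients on the unit circle.
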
Note that the Fermi-Dirac distribution $f(x)$ is analytic in the strip $\{z:|\text{Im}(z)|<\frac{\pi}{\beta}\}$ and $z=\mu\pm\frac{\pi}{\beta}i$ are the singular points. Since the two singular points correspond to $\frac{2}{b-a}\big(\mu-\frac{a+b}{2}\pm\frac{\pi}{\beta}i\big)$ under the linear transformation, the function $\tilde{f}(x)$ is analytic in the scaled strip $\{z:|\text{Im}(z)|<\frac{2}{b-a}\frac{\pi}{\beta}\}$. Thus, by the continuity of the Bernstein ellipse $E_{\rho}$ with respect to $\rho$, we can find $\rho$ sufficiently close to $1$ such that a Bernstein ellipse is a proper subset of the strip. Then, we apply the theorem to the function $\tilde{f}(x)$ and consider its Chebyshev approximation $\tilde{p}_{\ell'}(x)=\sum_{n=0}^{\ell'}c_nT_n(x)$. By the scaling in \eqref{lineartrans}, we can deduce that
\begin{equation*}
    p_{\ell'}(A)=\tilde{p}_{\ell'}(\tilde{A}).
\end{equation*}
Thus, we have
\begin{equation*}
\begin{aligned}
    &\| p_{\ell'}(A)v -  \|v\|_2 V_{\ell}p_{\ell'}(T_{\ell})e_1 \|_2=\|\sum_{n=\ell+1}^{\ell'}c_nT_n(\Tilde{A})v+\|v\|_2V_{\ell}\sum_{n=\ell+1}^{\ell'}c_nT_n(\tilde{T}_{\ell})e_1\|_2\\
    &\leq 2\sum_{n=\ell+1}^{\ell'}|c_n|\|v\|_2\leq 2\sum_{n=\ell+1}^{\infty}|c_n|\|v\|_2=4M(\rho)\|v\|_2\frac{\rho^{-\ell}}{\rho-1}.  
\end{aligned}
\end{equation*}In the first equality, we applied  Lemma $3.1$ in \cite{saad1992analysis}, which states as 
\begin{equation*}
    p_j(A)v=\|v\|_2V_\ell p_j(T_\ell)e_1
\end{equation*}for any polynomial $p(x)$ of degree $j\leq \ell$. In addition, the first inequality holds since $|T_n(x)|\leq 1$ and $\|\tilde{T_{\ell}}\|\leq\|\tilde{A}\|\leq 1$. In the last step, we have used the theorem above. 

Now we are ready to prove Theorem \ref{krylovapprox}.
\begin{proof} By collecting the above results, the error is bounded by,
\begin{equation*}
    \begin{aligned}
        &\| f(A)v -  \|v\|_2 V_{\ell}f(T_{\ell})e_1 \|_2\leq \frac{\|v\|_2V}{2^{s-2}\pi s}\bigg(\frac{\lambda_{\max}(A)-\lambda_{\min}(A)}{\ell'-s}\bigg)^s+4M(\rho)\|v\|_2\frac{\rho^{-\ell}}{\rho-1}.
    \end{aligned}
\end{equation*}Recalling that $\ell'$ is arbitrary and greater than $\ell$, the first term on the right hand side can be removed by letting $\ell'\rightarrow \infty$. This completes the proof. 
\end{proof}

\section{The proof of Theorem \ref{thmcontract'}}\label{sec: A2}

\begin{proof}

By the assumption that $\textrm{Re}(\lambda)<1$ for each eigenvalue $\lambda$ of $K'(q^*)$, there exists a small $\theta_{\max}>0$ such that for any $0<\theta<\theta_{\max}$, the spectral radius of $K_{\theta}'(q^*)$ is less than $1$. This is similar to the standard stability condition for the Euler's method for solving ODEs, and it corresponds to a circular disk in  the complex plane. We choose such a parameter $\theta$. We observe that $K_{U,\theta}'(p^*)$ in the theorem \ref{thmcontract'} reduces to an upper triangular matrix from the Schur decomposition of $K'(q^*)$ with some unitary matrix $U$. 
Theorems 3 and 4 in \cite{kincaid2009numerical}[p 214] provide an explicit construction of a matrix norm of $K_{U,\theta}'(p^*)$ that is less than 1.  The norm is induced by an inner product using a diagonal matrix, $D\succ 0.$ We denote the vector norm by  $\|\cdot\|_D.$  Those theorems and the choice of $\theta$ yield that, $\forall p_1 $ and $p_2,$
\begin{equation}
    \|K_{U,\theta}'(p^*) (p_1 - p_2) \|_D\leq c_1\|p_1 - p_2 \|_D,
\end{equation}for some $c_1<1.$ Now, we are ready to prove the result. Since $K$ is continuously differentiable, $K_{U,\theta}$ in Theorem \ref{thmcontract'} is continuously differentiable as well. Thus, by the continuity of the Jacobian, we have  $\|K_{U,\theta}'(p)\|_D<c_2<1,$ for some $c_2$ and for any $p\in B_D(p^*,\rho)$ where $B_D(p^*,\rho)$ is a ball centered at $p^*$ of some radius $\rho$ with respect to the $D$-norm. The local contractiveness can then be checked using the mean-value theorem for vector-valued functions. Since $\theta$ is arbitrarily chosen in $(0,\theta_{\max})$, the proof is completed.

\end{proof}

\section{Lemmas for the proofs in section \ref{sectionsto}}
\label{sec: A5}

\begin{lemma}\label{positive}
For each $N\in\mathbb{N}$ and a positive number $v$, assume that $\mathbb{P}(\|e_n\|_2>v\textrm{ for all }n\geq N)=0$. Then, $$\mathbb{P}(\liminf_{n}\|e_n\|_2\leq v)=1.$$ 
\end{lemma}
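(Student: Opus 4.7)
The claim is essentially a measure-theoretic bookkeeping statement, so the plan is to convert the hypothesis into its contrapositive form, intersect countably many probability-one events, and then translate the resulting pathwise statement into a bound on $\liminf$.

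First I would rephrase the hypothesis. For each fixed $N \in \mathbb{N}$, the event $\{\|e_n\|_2 > v \text{ for all } n \geq N\}$ having probability $0$ is equivalent to saying that its complement, namely
\begin{equation*}
A_N := \{\omega : \exists\, n \geq N \text{ with } \|e_n(\omega)\|_2 \leq v\},
\end{equation*}
has probability one. Since $\mathbb{N}$ is countable, the intersection $A := \bigcap_{N \in \mathbb{N}} A_N$ satisfies $\mathbb{P}(A) = 1$ by countable subadditivity applied to the complements.

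Next I would unpack what $A$ says pathwise. On $A$, for every $N$ there exists some index $n \geq N$ with $\|e_n\|_2 \leq v$. This is precisely the statement that the set of indices $\{n : \|e_n(\omega)\|_2 \leq v\}$ is unbounded, i.e., infinite. In particular, for every $N$,
\begin{equation*}
\inf_{n \geq N} \|e_n(\omega)\|_2 \leq v,
\end{equation*}
and letting $N \to \infty$ yields $\liminf_{n \to \infty} \|e_n(\omega)\|_2 \leq v$ on $A$.

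Thus $A \subseteq \{\liminf_n \|e_n\|_2 \leq v\}$, and therefore
\begin{equation*}
\mathbb{P}\bigl(\liminf_n \|e_n\|_2 \leq v\bigr) \geq \mathbb{P}(A) = 1,
\end{equation*}
which gives the desired equality. There is no real obstacle here; the only subtlety worth flagging is the countability of the intersection, which is what ensures $\mathbb{P}(A)=1$, together with the elementary fact that $\inf_{n \geq N} a_n \leq v$ for every $N$ implies $\liminf_n a_n \leq v$ (as opposed to $<$, where the argument would need an additional $\varepsilon$-sweep).
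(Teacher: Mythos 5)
Your proof is correct and follows essentially the same route as the paper: the paper takes the union of the null events $\{\|e_n\|_2>v \text{ for all } n\geq N\}$ over $N$ and passes to the complement, which is the De Morgan dual of your intersection of probability-one events, followed by the same pathwise observation about $\liminf$. If anything, your statement of the final step as an inclusion $A\subseteq\{\liminf_n\|e_n\|_2\leq v\}$ (rather than the paper's claimed set equality, which fails in one direction) is the more careful formulation.
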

\begin{proof}
Let $A_N:=\{w:\|e_n\|_2>v\textrm{ for all }n\geq N\}$. Note that $A_N\subset A_{N+1}$. Then, \begin{equation*}
    \bigcup_NA_N=\{w:\textrm{there exists a }N\in\mathbb{N}\textrm{ such that }\|e_n\|_2>v\textrm{ for all }n\geq N\},
\end{equation*}which implies
\begin{equation*}
    \bigg(\bigcup_NA_N\bigg)^c=\{w:\liminf_n\|e_n\|_2\leq v\}.
\end{equation*}By the countable additivity,  therefore, the Lemma holds true.
\end{proof}

However, for case $m\geq 2$, we will develop a more sophisticated tool. In the following Lemma, we employ well known results on irreducible aperiodic stochastic matrices in \cites{norris1998markov,lefebvre2007applied}. Moreover, we will use the Perron-Frobenius theorem in \cite{meyer2000matrix}.

\begin{lemma}\label{genconv}
Suppose that a sequence of iterates $\{e_n\}$ from the linear mixing scheme \ref{alg: linear} is bounded. Assume that for fixed $b_m>0$,
\begin{equation}\label{converges}
    \lim_{n\to\infty}\left[\|e_{n+m-1}\|_2^2+\sum_{j=2}^m\bigg(\sum_{i=1}^{m-j+1}b_i\bigg)\|e_{n+m-j}\|_2^2\right]=x.
\end{equation}Then,
\begin{equation*}
    \lim_{n\to\infty}\|e_n\|_2=\sqrt{\frac{x}{1+\sum_{j=2}^m\bigg(\sum_{i=1}^{m-j+1}b_i\bigg)}}.
\end{equation*}
\end{lemma}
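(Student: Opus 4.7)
Set $a_n := \|e_n\|_2^2 \geq 0$, which is bounded by hypothesis, and introduce $c_0 := 1$, $c_j := \sum_{i=1}^{m-j} b_i$ for $1 \leq j \leq m-1$, so that the hypothesis reads $S_n := \sum_{j=0}^{m-1} c_j\, a_{n+m-1-j} \to x$. The first step is to telescope: $\epsilon_n := S_{n+1} - S_n \to 0$, and a calculation using $c_j - c_{j-1} = -b_{m-j+1}$ gives
\begin{equation*}
 \epsilon_n \;=\; a_{n+m} - \sum_{i=1}^m b_i\, a_{n+i-1},
\end{equation*}
so $a_n$ obeys the linear mixing recurrence up to a vanishing inhomogeneity.

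Next I would package this scalar recurrence into companion form. Let $v_n := (a_n, a_{n+1}, \ldots, a_{n+m-1})^\top \in \mathbb{R}^m$ and let $M \in \mathbb{R}^{m\times m}$ be the companion matrix whose first $m-1$ rows implement the shift and whose last row is $(b_1, \ldots, b_m)$. Then $v_{n+1} = M v_n + \epsilon_n \mathbf{e}_m$, where $\mathbf{e}_m$ is the last standard basis vector, $M\mathbf{1} = \mathbf{1}$ (since $M$ is row-stochastic), and $\epsilon_n \to 0$. The plan is to diagonalize $M$ along $\mathbb{R}^m = \mathrm{span}(\mathbf{1}) \oplus W$, where $W$ is the $M$-invariant complement: $M$ acts as the identity on $\mathrm{span}(\mathbf{1})$, and I aim to show it acts as a strict contraction on $W$.

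The heart of the proof, and where I expect the main obstacle, is the spectral claim that every eigenvalue of $M$ other than $1$ has modulus strictly less than $1$; this is precisely where the hypothesis $b_m > 0$ enters. The characteristic polynomial factors as $(\lambda - 1) Q(\lambda)$ with $Q(\lambda) = \sum_{k=0}^{m-1} c_k \lambda^{m-1-k}$, and $Q(1) = \sum_j c_j > 0$ already shows $\lambda = 1$ is simple. For any other root I would write $\lambda^m = \sum_{i=1}^m b_i\, \lambda^{i-1}$ and apply the triangle inequality together with $\sum_i b_i = 1$ to get $|\lambda|^m \leq |\lambda|^{m-1}$, i.e., $|\lambda| \leq 1$. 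The delicate equality case $|\lambda|=1$ would be handled by observing that equality in the triangle inequality forces every $\lambda^{i-1}$ with $b_i > 0$ to share a common argument; invoking this identity at $i=m$ (available precisely because $b_m > 0$) together with $|\lambda^m| = 1$ collapses the relation to $\lambda = 1$, contradicting $Q(1) \neq 0$.

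To conclude, let $u$ be the left $1$-eigenvector of $M$ normalized by $u_m = 1$; direct substitution gives $u_j = \sum_{i=1}^j b_i$, so that $u^\top \mathbf{1} = \sum_{j=0}^{m-1} c_j$ and $u^\top v_n = S_n$. Decomposing $v_n = \beta_n \mathbf{1} + w_n$ with $\beta_n = u^\top v_n / u^\top \mathbf{1}$ and $w_n \in W$, the hypothesis immediately yields $\beta_n = S_n / \sum_j c_j \to x / \sum_j c_j$. The complementary component satisfies $w_{n+1} = (M|_W) w_n + \epsilon_n (I-\Pi)\mathbf{e}_m$, where $\Pi$ is the spectral projection onto $\mathrm{span}(\mathbf{1})$ along $W$; since the spectral radius of $M|_W$ is strictly less than $1$ and $\epsilon_n \to 0$, a standard discrete convolution bound forces $w_n \to 0$. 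Thus every entry of $v_n$ tends to $x / \sum_j c_j$, and recognizing $\sum_j c_j = 1 + \sum_{j=2}^m \sum_{i=1}^{m-j+1} b_i$ before taking square roots of $a_n = \|e_n\|_2^2$ produces the claimed limit.
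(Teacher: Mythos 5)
Your proof is correct, and it takes a genuinely different route from the paper's. The paper argues by compactness: it extracts convergent sub-subsequences of the shifted sequences $e_{n_k-i}$, uses the mixing recursion itself (together with $a_n\to 0$) to show the subsequential limits satisfy $l_{n+m}=\sum_i b_i l_{n+i-1}$, and then invokes Markov-chain machinery --- irreducibility, aperiodicity, Perron--Frobenius, and convergence of $B^n$ to its rank-one equilibrium --- to force all these limits to coincide, before identifying the common value from the hypothesis. You instead work directly with the scalars $a_n=\|e_n\|_2^2$ and observe that the hypothesis \emph{alone}, via the exact telescoping identity $S_{n+1}-S_n=a_{n+m}-\sum_{i=1}^m b_i a_{n+i-1}$, already supplies the recurrence with a vanishing inhomogeneity; no appeal to the mixing scheme or to the damping schedule is needed. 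Your spectral analysis of the companion matrix (the factorization of the characteristic polynomial as $(\lambda-1)Q(\lambda)$ with $Q(1)=\sum_j c_j>0$, and the equality case of the triangle inequality forcing $\lambda^{m-1}=\lambda^m$ when $b_m>0$, hence $\lambda=1$) correctly shows that $1$ is a simple eigenvalue and all others lie strictly inside the unit disk, and the identifications $u_j=\sum_{i\le j}b_i$, $u^{\top}v_n=S_n$, $u^{\top}\mathbf{1}=\sum_j c_j$ all check out, so the stable-recursion-with-vanishing-forcing bound delivers convergence of the whole sequence with no subsequence extraction. Your route buys two things: it is self-contained linear algebra rather than Markov-chain limit theory, and it uses only the stated hypothesis $b_m>0$, whereas the paper's irreducibility step additionally invokes $b_1>0$, a condition not present in \eqref{mixingcoeff}.
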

\begin{proof}
Take a subsequence $\{e_{n_k}\}$. Then, since the sequence $\{e_n\}$ is bounded, we can find a convergent sub-subsequence. To reduce notations, we preserve the same indices $\{n_k\}$ for this convergent sub-subsequence. Furthermore, we can assume that the $m$ shifted sequences are convergent, namely, 
\begin{equation*}
    \{e_{n_k}\},\{e_{n_k-1}\},...,\{e_{n_k-(m-1)}\}\textrm{ converge}.
\end{equation*}

For any $N\in\mathbb{N}$ and $N>m$, let $l_{i,N}:=\lim_{k\to\infty}e_{n_k-N+i}$ for $1\leq i \leq N$. Then, it follows that for $n\in\{1,2,...,N-m\}$,
\begin{equation}\label{mixingscheme}
    l_{n+m,N}=\sum_{i=1}^mb_il_{n+i-1,N},
\end{equation}from the mixing scheme \ref{alg: linear} by noting that the damping parameters $\{a_n\}$ converge to $0$. We claim that the limits of the shifted sequences are the same, i.e.,
\begin{equation*}
    \lim_{k\to\infty}e_{n_k}=\lim_{k\to\infty}e_{n_k-1}=\cdots=\lim_{k\to\infty}e_{n_k-(m-1)}.
\end{equation*}
It is sufficient to show that the first entries of the limits are the same. With the standard basis vector $e_1=(1,0,0,..,0)^T$, we define the $n$th vector 
\begin{equation*}
    \tilde{l}_{n,N}:=(l_{n+m-1,N}\cdot e_1,l_{n+m-2,N}\cdot e_1,...,l_{n,N}\cdot e_1)^T,
\end{equation*}which contains the first entries of the vectors $\{l_{i,N}\}_{i=n}^{n+m-1}$.
Next, from the relation \eqref{mixingscheme}, we can define a recursive system such that for $1\leq n\leq N-m$,
\begin{equation*}
    \tilde{l}_{n+1,N}=B\tilde{l}_{n,N},
\end{equation*}where
\begin{equation*}
    B=\begin{pmatrix}b_m&b_{m-1}&\cdots&b_2&b_1\\
    1&0&\cdots&0&0\\0&1&\cdots&0&0\\\vdots&\vdots&\cdots&\cdots&0\\0&\cdots&\cdots&1&0\end{pmatrix}.
\end{equation*}The matrix $B$ can be recognized as a companion matrix. Thus, the characteristic polynomial of $B$ has one as its root, because $\sum_{i=1}^mb_i=1$. 

Note that it is the mixing scheme with $m$ steps, which assumes that $b_1>0$. For this reason, $B$ is an irreducible matrix, which means that all the nodes $\{1,2,..,m\}$ communicate in the graph corresponding to the matrix $B$ \cite{lefebvre2007applied}[p 86], which can interpreted as a transition matrix.

Moreover, since $b_m>0$ by assumption and $B$ is irreducible, $B$ is aperiodic \cite{lefebvre2007applied}[p 91]. Also, since all rows sum to one, $B$ is a stochastic matrix. By the Gershgorin's theorem, we can guarantee that $\rho(B)\leq 1$, which denotes the spectral radius of $B$. Thus, by applying the Perron-Frobenius theorem to $B^T$ \cite{meyer2000matrix}[p 673], we can find a left eigenvector $\pi>0$ 
\begin{equation*}
    \pi B=\pi.
\end{equation*}Since $B$ is irreducible, aperiodic and has the invariant distribution $\pi$, the matrix $B^n$ converges to equilibrium as stated in \cite{norris1998markov}[Theorem 1.8.3], namely,
\begin{equation*}
    \lim_{n\to\infty}B^n=  \begin{pmatrix}\pi_1&\pi_2&\cdots&\pi_m\\
    \pi_1&\pi_2&\cdots&\pi_m\\\pi_1&\pi_2&\cdots&\pi_m\\\vdots&\vdots&\cdots&\vdots\\\pi_1&\pi_2&\cdots&\pi_m\end{pmatrix}.
\end{equation*}On the other hand, from the recursive relation, we have $\tilde{l}_{N-m+1,N}=B^{N-m}\tilde{l}_{1,N}$, or 
\begin{equation*}
    \begin{pmatrix}
    l_{N,N}\cdot e_1\\l_{N-1,N}\cdot e_1\\ \vdots\\l_{N-m+1,N}\cdot e_1
    \end{pmatrix}=B^{N-m}\begin{pmatrix}
    l_{m,N}\cdot e_1\\l_{m-1,N}\cdot e_1\\ \vdots\\l_{1,N}\cdot e_1
    \end{pmatrix}.
\end{equation*}Since the sequence $\{e_n\}_{n=1}^{\infty}$ is bounded by assumption and $B^n$ converges, by letting $N\to\infty$, we can obtain the result that for any $i,j\in\{0,1,2,...,m-1\}$,
\begin{equation*}
        \lim_{N\to\infty}l_{N-i,N}\cdot e_1=\lim_{N\to\infty}l_{N-j,N}\cdot e_1,
\end{equation*}which implies that
\begin{equation*}
    \lim_{k\to\infty}e_{n_k}\cdot e_1=\lim_{k\to\infty}e_{n_k-1}\cdot e_1=\cdots=\lim_{k\to\infty}e_{n_k-m+1}\cdot e_1.
\end{equation*}With the same technique for the other coordinates, it immediately follows that  
\begin{equation*}
    \lim_{k\to\infty}e_{n_k}=\lim_{k\to\infty}e_{n_k-1}=\cdots=\lim_{k\to\infty}e_{n_k-m+1}.
\end{equation*}
Here, we proved the claim. From this result, we can use Assumption \eqref{converges} as follows
\begin{equation*}
    x=\lim_{k\to\infty}\left[\|e_{n_k}\|_2^2+\sum_{j=2}^m\bigg(\sum_{i=1}^{m-j+1}b_i\bigg)\|e_{n_k+1-j}\|_2^2\right]=\bigg[1+\sum_{j=2}^m\bigg(\sum_{i=1}^{m-j+1}b_i\bigg)\bigg]\|\lim_{k\to\infty}e_{n_k}\|_2^2.
\end{equation*}To sum up, for any subsequence of the sequence $\{\|e_n\|_2\}_{n=1}^{\infty}$, we can find a sub-subsequence convergent to \begin{equation*}
    \sqrt{\frac{x}{1+\sum_{j=2}^m\bigg(\sum_{i=1}^{m-j+1}b_i\bigg)}}.
\end{equation*}Therefore, this completes the proof of the lemma.

\end{proof}

Next, in order to derive non-negative supermartingales from the general mixing scheme \ref{alg: linear} with the Lyapunov functions \eqref{mLya}, we prove the following inequalities which will be used in Theorem  \ref{casem}.

By using the Cauchy-Schwarz inequality and the Jensen's inequality, the $n+1$st error can be bounded as
\begin{equation}\label{uniform}
\begin{split}
    &\mathbb{E}[\|e_{n+1}\|_2^2]=\mathbb{E}[\|B_m(e_n)+a_nB_m(G_n)\|_2^2]\\
    &\leq \sum_{i=1}^mb_i^2\mathbb{E}[\|e_i+a_nG_i\|_2^2]+\sum_{i\neq j}\mathbb{E}[b_ib_j(e_i+a_nG_i,e_j+a_nG_j)]\\
    &\leq \sum_{i=1}^mb_i^2\mathbb{E}[\|e_i+a_nG_i\|_2^2]+\sum_{i\neq j}\sqrt{b_i^2\mathbb{E}[\|e_i+a_nG_i\|_2^2]}\sqrt{b_j^2\mathbb{E}[\|e_j+a_nG_j\|_2^2]}\\
    &=\bigg(\sum_{i=1}^mb_i\sqrt{\mathbb{E}[\|e_i+a_nG_i\|_2^2]}\bigg)^2\leq \sum_{i=1}^mb_i\mathbb{E}[\|e_i+a_nG_i\|_2^2]
\end{split}
\end{equation}With this, we can establish inequalities which will define a non-negative supermartingale eventually.

For general $m$, from the inequality \eqref{uniform}, the conditional expectation of $\|X_{n+1}\|_{n+1}$ is bounded as
\begin{equation*}
    \begin{split}
        &\mathbb{E}[\|X_{n+1}\|_{n+1}|X_n]\\
        &=\mathbb{E}[\|e_{n+m}\|_2^2|X_n]+\sum_{j=2}^m\sum_{i=j}^mb_{m-i+1}\mathbb{E}[\|e_{n+j-1+m-i}+a_{n+m-2+j}G_{n+j-1+m-i}\|_2^2|X_n]\\
        &\leq \sum_{i=1}^mb_{i}\mathbb{E}[\|e_{n+i-1}+a_{n+m-1}G_{n+i-1}\|_2^2|X_n]\\
        &+\sum_{j=2}^m\sum_{i=j}^mb_{m-i+1}\mathbb{E}[\|e_{n+j-1+m-i}+a_{n+m-2+j}G_{n+j-1+m-i}\|_2^2|X_n].
    \end{split}
\end{equation*} In this upper bound, the first summation is divided into two parts as follows
\begin{equation*}
        b_m\mathbb{E}[\|e_{n+m-1}+a_{n+m-1}G_{n+m-1}\|_2^2|X_n]+\sum_{i=1}^{m-1}b_i\|e_{n+i-1}+a_{n+m-1}G_{n+i-1}\|_2^2.
\end{equation*}
Meanwhile, the double summation term equals
\begin{equation*}
    \begin{split}
        &\sum_{j=2}^mb_{m-j+1}\mathbb{E}[\|e_{n+m-1}+a_{n+m-2+j}G_{n+m-1}\|_2^2|X_n]\\
        &+\sum_{j=2}^{m-1}\sum_{i=j+1}^mb_{m-i+1}\|e_{n+j-1+m-i}+a_{n+m-2+j}G_{n+j-1+m-i}\|_2^2,
    \end{split}
\end{equation*}by pulling out the terms involving $e_{n+m-1}$. By grouping terms with and without $e_{n+m-1}$, we can rewrite the bound as
\begin{equation*}
    \begin{split}
        &\mathbb{E}[\|X_{n+1}\|_{n+1}|X_n]\leq\sum_{j=1}^mb_{m-j+1}\mathbb{E}[\|e_{n+m-1}+a_{n+m-2+j}G_{n+m-1}\|_2^2|X_n]\\
        &+\sum_{j=2}^{m}\sum_{i=j}^mb_{m-i+1}\|e_{n+j-2+m-i}+a_{n+m-3+j}G_{n+j-2+m-i}\|_2^2.
    \end{split}
\end{equation*}

 By recalling Definition \eqref{lyapunovfunction}, it follows that
\begin{equation}\label{supermartingm}
\begin{split}
    &\mathbb{E}[\|X_{n+1}\|_{n+1}|X_n]-\|X_n\|_n\\
    &=\sum_{j=1}^mb_{m-j+1}\mathbb{E}[\|e_{n+m-1}+a_{n+m-2+j}G_{n+m-1}\|_2^2|X_n]-\|e_{n+m-1}\|_2^2\\
    &\leq 2\bigg(\sum_{j=1}^mb_{m-j+1}a_{n+m-2+j}\bigg)(e_{n+m-1},R_{n+m-1})\\
    &+\sum_{j=1}^mb_{m-j+1}a_{n+m-2+j}^2\bigg(\Xi+\|R_{n+m-1}\|^2\bigg)\\
    &\leq -(1-c)\bigg(\sum_{j=1}^mb_{m-j+1}a_{n+m-2+j}\bigg)\|e_{n+m-1}\|_2^2\\
    &+\Xi\sum_{j=1}^mb_{m-j+1}a_{n+m-2+j}^2.
\end{split}
\end{equation}In the first inequality, we used Assumption \ref{as:05} and the fact that $(R_{n+m-1},\xi_{n+m-1})=0$ together with Definition \eqref{eq: residual}. In the second inequality, we apply the inequalities \eqref{removeresidue} and the condition of boundedness in \eqref{eq: conda}. From the last part of the inequality \ref{supermartingm}, we make notation for the coefficients attached to the terms $\|e_{n+m-1}\|^2$ and $\Xi$ as follow
\begin{subequations}\label{newcoeff}
\begin{eqnarray}
        &A_n:=\sum_{j=1}^mb_{m-j+1}a_{n+m-2+j}\\
        &\chi_n:=\sum_{j=1}^mb_{m-j+1}a_{n+m-2+j}^2.
\end{eqnarray}
\end{subequations}Note that the condition \eqref{eq: conda} together with \eqref{mixingcoeff} yields
\begin{equation}
    \sum_{n=1}^{\infty}A_n=\infty,\quad\sum_{n=1}^{\infty}\chi_n<\infty.
\end{equation}

By defining $V_n(X_n)=\|X_n\|_n+\Xi \sum_{i=n}^{\infty}B_i$, the above can be rewritten 
\begin{equation}\label{lyapunovfunction}
    \mathbb{E}[V_{n+1}(X_{n+1})|X_n]-V_n(X_n)\leq -(1-c)A_n\|e_{n+m-1}\|_2^2\leq 0.
\end{equation}

\section{The proofs of theorems in section \ref{section4}}\label{sec: A6} 

\subsection{Theorem \ref{stabsimple}}
\begin{proof} 

To establish the stability, we follow the proof in \cite{kushner2003stochastic}[p 112, Theorem 5.1]. 
By the definition of $V(\cdot)$, for any $q_n\in B(q^*,\rho)$ direct calculations yield 
\begin{equation*}
\begin{aligned}
    &\mathbb{E}_n\left[V(q_{n+1})\right]-V(q_n)=\mathbb{E}_n\left[\|e_n+a_nG_n\|_2^2\right]-\|e_n\|_2^2,\\
    &=2a_n\mathbb{E}_n\left[(e_n,G_n)\right]+a_n^2\mathbb{E}_n\left[\|G_n\|_2^2\right],\\
    &\leq -2a_n(1-c)\|e_n\|_2^2+(c+1)^2a_n^2\|e_n\|_2^2+\Xi a_n^2,\\
    &\leq -a_n(1-c)\|e_n\|_2^2+\Xi a_n^2.
\end{aligned}
\end{equation*}In the first inequality, we use Assumption \ref{as:05} and remove the cross term $\mathbb{E}_n[(R_n,\xi_n)]$, where the residual $R_n$ and the error $\xi_n$ sum to $G_n$. Besides, we used the following inequalities
\begin{subequations}\label{removeresidue}
\begin{eqnarray}
        &(e_n,R_n)=-\|e_n\|_2^2+(K(q_n)-K(q^*),e_n)\leq (c-1)\|e_n\|_2^2\\
        &\|R_n\|^2\leq (1+c)^2\|e_n\|_2^2.
\end{eqnarray}
\end{subequations}
In the last step, we used the condition that $a_n\leq\frac{1-c}{(1+c)^2}$. 

We proceed by observing that $V_n(q_n)\geq 0$ and
\begin{equation*}
    \delta V_{n+1}-\delta V_n=-\Xi a_n^2,
\end{equation*}
which implies the following inequality,
\begin{equation}\label{supmartingineq}
    \mathbb{E}_n[V_{n+1}(q_{n+1})]-V_n(q_n)\leq -(1-c)a_n\|e_n\|_2^2\leq 0.
\end{equation}

Here, we define the stopping time $\tau_{\rho}:=\{n:\|e_n\|_2>\rho\}$. Accordingly, we define a stopped process for $q_n$ and a corresponding Lyapunov function as follows 
\begin{equation}\label{stoppedpro}
    \tilde{q}_n:=\begin{cases}
    q_n,\; n\leq\tau_{\rho}\\
    q_{\tau_{\rho}},\; n>\tau_{\rho}
    \end{cases},
    \tilde{V}_n:=\begin{cases}
    V_n,\; n\leq\tau_{\rho}\\
    V_{\tau_{\rho}},\; n>\tau_{\rho}
    \end{cases}.
\end{equation}This technique is shown in the proof of Theorem 5.1 in \cite{kushner2003stochastic}, which yields the non-negative supermartingale $\{\tilde{V}_n(\tilde{q}_n)\}$. 

Similar to the proof of Theorem 5.1 in \cite{kushner2003stochastic}, we can deduce that,
\begin{equation*}
\begin{aligned}
   &\mathbb{P}\left\{\sup_{n}\|e_n\|_2 > \rho|q_1\right\}\mathbb{I}_{\{q_1\in B(q^*,\rho)\}}\leq\mathbb{P}\left\{\sup_n V_n(q_n)>\rho^2|q_1\right\}\mathbb{I}_{\{q_1\in B(q^*,\rho)\}}\leq \frac{V_1(q_1)}{\rho^2},
\end{aligned} 
\end{equation*}
which concludes the first part of the theorem. 
\medskip

Since the stopped process $\{\tilde{V}_n(\tilde{q}_n)\}_{n\ge 1}$ forms a supermartingale as
\begin{equation}\label{super}
    \mathbb{E}_n[\tilde{V}_{n+1}(\tilde{q}_{n+1})]\leq \tilde{V}_n(\tilde{q}_n),~ \forall n \geq 1,
\end{equation}
$\Tilde{V}_n(\tilde{q}_n)$ converges to some random variable $\tilde{V}\geq 0$. In the event where $\|e_n\|_2\leq\rho$ for all $n\in\mathbb{N}$, this implies that
\begin{equation*}
    \lim_{n\to\infty}V_n(q_n)=\lim_{n\to\infty}\|e_n\|_2^2,
\end{equation*}with probability one, since $\sum_n a_n^2<\infty$. Suppose that $\|e_n\|_2$ converges to a positive random variable $V$ with positive probability. Then, there exists a positive number $\delta>0$ such that $$\mathbb{P}\left\{\lim_{n\to\infty}\|e_n\|_2>\delta\Big|\{q_n\}\subset B(q^*,\rho)\right\}>0.$$ By Lemma \ref{positive}, we have for some $N\in\mathbb{N}$,
\begin{equation*}
    \mathbb{P}\left\{\|e_n\|_2>\frac{\delta}{2}\textrm{ for all }n\geq N \Big|\lim_{n\to\infty}\|e_n\|>\delta,\{q_n\}\subset B(q^*,\rho)\right\}>0.
\end{equation*}

On the other hand, by a telescoping trick with the inequality \eqref{supmartingineq}, for any given $q_1\in B(q^*,\rho)$, we have
\begin{equation*}
    V_1(q_1)\geq V_1(q_1)-\mathbb{E}_1[\Tilde{V}_n(\Tilde{q}_n)]\geq 2(1-c)\mathbb{E}_1\left[\sum_{i=1}^{n-1}a_{i}\|\tilde{e}_{i}\|_2^2\right],
\end{equation*}which implies
\begin{equation*}
    \mathbb{E}_1\left[\sum_{i=1}^{\infty}a_{i}\|\tilde{e}_{i}\|_2^2\right]<\infty.
\end{equation*}

By the above results, we can deduce that
\begin{equation*}
    \mathbb{P}\left\{\|e_n\|_2\geq\frac{\delta}{2}\textrm{ for all }n\geq N,\{q_n\}_{n=1}^{\infty}\subset B(q^*,\rho)\right\}>0,
\end{equation*}which implies that
\begin{equation*}
\begin{split}
    &\infty>\mathbb{E}_1\left[\sum_{i=1}^{\infty}a_{i}\|\tilde{e}_{i}\|_2^2\right]\geq\mathbb{E}_1\left[\sum_{i=1}^{\infty}a_{i}\|\tilde{e}_{i}\|_2^2\mathbb{I}_{\{\|e_n\|_2>\frac{\delta}{2}\textrm{ for all }n\geq N,\{q_n\}_{n=1}^{\infty}\subset B(q^*,\rho)\}}\right]\\
    &\geq \frac{\delta^2}{4}\bigg(\sum_{i=N}^{\infty}a_{i}\bigg)\cdot\mathbb{P}\left\{\|e_n\|_2>\frac{\delta}{2}\textrm{ for all }n\geq N,\{q_n\}_{n=1}^{\infty}\subset B(q^*,\rho)\right\}.
\end{split}
\end{equation*}Since $\sum_na_n=\infty$, this is a contradiction. Therefore, $\|e_n\|_2$ converges to $0$ with probability one when $\{q_n\}\subset B(q^*,\rho)$.
\end{proof}

\subsection{Theorem \ref{casem}}

\begin{proof}
Define $\tau_{\rho}=\min\{n:\|e_{n+m-1}\|_2>\rho\}$ as a stopping time. Similar to \eqref{stoppedpro}, we define 
\begin{equation}\label{stoppedpro2}
    \tilde{X}_n:=\begin{cases}
    X_n,\; n\leq \tau_{\rho}\\
    X_{\tau_{\rho}},\; n>\tau_{\rho}
    \end{cases},
    \tilde{V}_n:=\begin{cases}
    V_n,\; n\leq \tau_{\rho}\\
    V_{\tau_{\rho}},\; n>\tau_{\rho}.
    \end{cases}
\end{equation}
We will prove the theorem similar to the proof of the theorem \ref{stabsimple}. The inequality \eqref{lyapunovfunction} will yield a non-negative supermartingale, which justifies the first statement. Also, the stopped process $\Tilde{V}_n(\tilde{X}_n)$ converges to some random variable $\tilde{V}\geq 0$. Therefore, in the event where $\|e_n\|_2\leq\rho$ for all $n\in\mathbb{N}$, we can deduce that  
\begin{equation*}
\begin{split}
    &\lim_{n\to\infty}V_n(X_n)=\lim_{n\to\infty}\|X_n\|_n=\lim_{n\to\infty}\left[\|e_{n+m-1}\|_2^2+\sum_{j=2}^m\sum_{i=j}^mb_{m-i+1}\|e_{n+j-2+m-i}\|_2^2\right]\\
    &=\lim_{n\to\infty}\left[\|e_{n+m-1}\|_2^2+\sum_{j=2}^m\bigg(\sum_{i=1}^{m-j+1}b_i\bigg)\|e_{n+m-j}\|_2^2\right]
\end{split}
\end{equation*}with probability one. The second equality holds as in the previous proof. Moreover, by applying the lemma \ref{genconv} to the last step, the sequence $\{\|e_n\|_2\}$ converges with probability one, namely, 
\begin{equation*}
    \lim_{n\to\infty}V_n(X_n)=\left[1+\sum_{j=2}^m\left(\sum_{i=1}^{m-j+1}b_i\right)\right]\lim_{n\to\infty}\|e_n\|_2^2
\end{equation*}

For similar reasoning in the proof of the theorem \ref{stabsimple}, $\|e_n\|_2$ converges to $0$ when all the iterates are in $B(q^*,\rho)$.

\end{proof}

\subsection{Theorem \ref{concentlinear}}\label{proofconcent} 

\begin{proof}
As already established, we use the almost supermartingale property \eqref{lyapunovfunction}. This property can be rewritten as
\begin{equation}
    \mathbb{E}[V_{n+1}(X_{n+1})|X_n]\leq V_n(X_n)-A_nk(X_n),
\end{equation}where $A_n$ is defined in \eqref{newcoeff} and $k(X_n)=(1-c)\|e_{n+m-1}\|_2^2$. With \eqref{stoppedpro2} and this function $k(X_n)$, we can define the non-negative supermartingale as similar in Theorem 5.1 \cite{kushner2003stochastic}
\begin{equation}
    \mathbb{E}[\tilde{V}_{n+1}(\tilde{X}_{n+1})|\tilde{X}_n]\leq \tilde{V}_n(\tilde{X}_n)-A_n\tilde{k}(\tilde{X}_n),
\end{equation}where 
\begin{equation}
\tilde{k}(\tilde{X}_n)=\begin{cases}
k(X_n),\; q_{n+m-1}\in B(q^*,\rho)\\
0,\; q_{n+m-1}\not\in B(q^*,\rho).
\end{cases}
\end{equation}

By taking the total expectation on the supermartingale and telescoping inequalities, we obtain
\begin{equation}\label{firstresult}
    \sum_{n=1}^jA_n\mathbb{E}[k(X_n)\mathbb{I}_{E_j}|X_1]\leq\sum_{n=1}^jA_n\mathbb{E}[\tilde{k}(\tilde{X}_n)|X_1]\leq V_1(X_1).
\end{equation}We note that as defined above, the function $k(X_n)$ is a convex function with respect to $q_{n+m-1}$ as a quadratic function. Thus, by the Jensen's inequality, we have
\begin{equation}
    (1-c)\|\bar{q}-q^*\|_2^2\leq \sum_{n=1}^j\left(\frac{A_n}{\sum_{n=1}^jA_n}\right)k(X_n),
\end{equation}where 
\begin{equation}
    \bar{q}:=\sum_{n=1}^j\left(\frac{A_n}{\sum_{n=1}^jA_n}\right)q_n.
\end{equation}To put these together, we arrive at
\begin{equation}
    \mathbb{E}[\|\bar{q}-q^*\|_2^2\mathbb{I}_{E_j}|X_1]\leq \frac{V_1(X_1)}{(1-c)\left(\sum_{n=1}^jA_n\right)}.
\end{equation}This is the first result. For convenience, let us denote the RHS of this inequality by $u_j$.

By applying the Markov's inequality to this result, we have
\begin{equation*}
    \mathbb{P}\left\{\|\bar{q}-q^*\|_2>\epsilon\textrm{ and }E_j\textrm{ occurs }|X_1\right\}\leq \frac{u_j}{\epsilon^2},
\end{equation*}equivalently,
\begin{equation*}
    \mathbb{P}\left\{\|\bar{q}-q^*\|_2\leq\epsilon\textrm{ or }E_j\textrm{ does not occur}|X_1\right\}\geq 1-\frac{u_j}{\epsilon^2}.
\end{equation*}

By the stability result in the theorem \ref{casem}, the probability for $E_j$ to not occur is bounded above
\begin{equation*}
    \mathbb{P}\left\{E_j\textrm{ does not occur}|X_1\right\}\leq \frac{V_1(X_1)}{\rho^2},
\end{equation*}which leads to 
\begin{equation*}
    \mathbb{P}\left\{\|\bar{q}-q^*\|_2\leq\epsilon|X_1\right\}\geq 1-\frac{u_j}{\epsilon^2}-\frac{V_1(X_1)}{\rho^2}.
\end{equation*}

\end{proof}


\bibliographystyle{plain}
\bibliography{scc,dft,additional}

\end{document}